\newtheorem{thm}{Theorem}[section]
\newtheorem{prop}[thm]{Proposition}
\newtheorem{lem}[thm]{Lemma}
\newtheorem{cor}[thm]{Corollary}
\theoremstyle{definition}
\newtheorem{quest}[thm]{Question}
\theoremstyle{remark}
\newtheorem{rem}[thm]{Remark}
\newtheorem{rems}[thm]{Remarks}
\numberwithin{equation}{section}
\newcommand{\Ext}{\mathrm{Ext}}
\newcommand{\Coker}{\mathrm{Coker}}
\newcommand{\Ima}{\mathrm{Im}}
\newcommand{\Hom}{\mathrm{Hom}}
\newcommand{\wc}{\mathrm{wc}}
\newcommand{\pt}{\mathrm{pt}}
\newcommand{\U}{\mathrm{U}}
\newcommand{\SO}{\mathrm{SO}}
\newcommand{\Gal}{\mathrm{Gal}}
\newcommand{\RR}{\mathrm{R}}
\def\myrightarrow{{\setbox\z@\hbox{$\rightarrow$}\dimen0\ht\z@\multiply\dimen0 6\divide\dimen0 10\ht\z@\dimen0\box\z@}}
\def\myrightarrowfill@{\arrowfill@\relbar\relbar\myrightarrow}
\newcommand{\myxrightarrow}[2][]{\ext@arrow 0359\myrightarrowfill@{#1}{#2}}
\def\loccit{\emph{loc}.\kern3pt \emph{cit}.{}\ }
\def\eg{e.g.\kern.3em}
\def\ie{i.e.,\ }
\def\resp {\text{resp.}\kern.3em}
\def\Z{\mathbb Z}
\def\C{\mathbb C}
\def\F{\mathbb F}
\def\G{\mathbb G}
\def\Q{\mathbb Q}
\def\R{\mathbb R}
\def\bS{\mathbb S}
\def\cO{\mathcal{O}}
\def\cS{\mathcal{S}}
\def\ci{\mathcal{C}^{\infty}}
\def\whK{\widehat{K}}
\begin{document}

\title[]{On the Artin vanishing theorem for Stein spaces}

\author{Olivier Benoist}
\address{D\'epartement de math\'ematiques et applications, \'Ecole normale sup\'erieure, CNRS,
45 rue d'Ulm, 75230 Paris Cedex 05, France}
\email{olivier.benoist@ens.fr}

\subjclass[2020]{32E10; 32C18; 32E30}

\renewcommand{\abstractname}{Abstract}
\begin{abstract}
Artin vanishing theorems for Stein spaces refer to the vanishing of some of their (co)homology groups in degrees higher than the dimension.  We obtain new positive and negative results concerning Artin vanishing for the cohomology of a Stein space relative to a Runge open subset. We also prove an Artin vanishing theorem for the $\Gal(\C/\R)$\nobreakdash-equivariant cohomology of a $\Gal(\C/\R)$\nobreakdash-equivariant Stein space relative to the fixed locus.
\end{abstract}
\maketitle

\section*{Introduction}

\subsection{Artin vanishing theorems for Stein spaces}
\label{parArtin}

Stein spaces (see \cite{GRStein}) are the complex-analytic analogues of affine complex algebraic varieties. They are exactly those  complex spaces whose higher coherent cohomology groups all vanish.  Examples include all closed complex subspaces of $\C^N$ (for some $N\geq 0$). 

Let $S$ be a Stein space of dimension $n$.  Artin vanishing theorems for $S$ assert that some of the (co)homology groups of $S$ (with values in abelian groups,  local systems...) vanish in degrees $>n$. 
In this introductory paragraph, we review the known results in this direction. 
Since these results are purely topological, it is harmless to assume that $S$ is reduced.

\subsubsection{The homotopy type of a Stein space}
\label{Artin1}

The theme which we consider here goes back to the work of Andreotti and Frankel. They proved in \cite[\S 2]{AF} that if~$S$ is nonsingular, then it has the homotopy type of a CW complex of dimension~$n$. It follows that $H^k(S,A)=0$ and $H_k(S,A)=0$ for any $k>n$ and any abelian group~$A$, and that $H_n(S, \Z)$ is a free abelian group.

These statements were extended by Narasimhan \cite[Theorem 3]{Narasimhanhomo} 
%This article finds Runge neighborhoods of analytic subsets (Theorem 2). Also looks for Runge domains of C^n with prescribed topology (Theorems 4 et 5).  Applies Morse theory through [AN].
and Hamm (\cite[Satz 1]{Hamm}, see also \cite[Korollar]{Hamm2}) to possibly singular Stein spaces. 
Their proofs are based on Morse theory.
Applied to affine complex algebraic varieties, these results are closely related to (and imply) the weak Lefschetz hyperplane theorem.

\subsubsection{Weakly constructible coefficients}
\label{Artin2}

The theorems of Andreotti--Frankel and Hamm were generalized in several directions. 
First, it is possible to consider cohomology with more general (sheaf) coefficients. 

 In the algebraic setting,  for \'etale cohomology, such an extension applicable to any torsion \'etale sheaf was provided by Artin \cite[XIV, Corollaire~3.2]{SGA43}.  
To state an analogue in Stein geometry,  we say that a sheaf $\F$ on $S$ is \textit{weakly constructible} if it is locally constant in restriction to all the strata of some analytic stratification of $S$.
It then follows from the stratified Morse theory of Goresky and MacPherson~\cite{GMP}
that $H^k(S,\F)=0$ for any $k>n$ and any weakly constructible sheaf~$\F$ on~$S$ (see Kashiwara--Schapira \cite[Theorem~10.3.8]{KS}
%KS suffit ici, mais ils ne travaillent pas avec une fonction psh générale, ce qui est important pour nous par la suite. Leurs arguments se généralisent cependant à ce cadre.
and Sch\" urmann \cite[Corollary~6.1.2]{Schurmann}).
Cohomology with values in weakly constructible sheaves encompasses in particular cohomology groups relative to a closed analytic subset (which had been considered earlier, see \eg \cite[Theorem 3]{CM}).

\subsubsection{\texorpdfstring{$\cO(S)$}{O(S)}-convex subsets}
\label{Artin3}

In a second direction, it is possible to consider relative (co)homology groups of $S$ with respect to  subsets of $S$ with appropriate holomorphic convexity properties.   We note that these generalizations actually play an important role in the proofs of the results stated in \S\ref{Artin1} and \S\ref{Artin2}.

Recall that the $\cO(S)$\nobreakdash\textit{-con\-vex hull} of a compact subset $K\subset S$ is the compact subset $\whK_{\cO(S)}=\{s\in S\mid |f(s)|\leq \sup_{t\in K}|f(t)|\textrm{ for all }f\in\cO(S)\}$, and that a compact subset~$K\subset S$ is $\cO(S)$\textit{-convex} if $K=\whK_{\cO(S)}$. 
It is then true that $H^k(S,K,\F)=0$  for any $k>n$, any $\cO(S)$-convex compact subset $K$ of $S$, and any weakly constructible sheaf $\F$ on $S$ (this is very close to \cite[Corollary 6.1.2]{Schurmann}, see Proposition \ref{propArtin} below for a proof of this precise statement).

Results concerning the relative (co)homology of $S$ with respect to suitably holomorphically convex open subsets have appeared earlier, but turn out to be more subtle.
An open subset~$U\subset S$ is said to be \textit{Runge} if for all compact subsets~$K\subset U$, one has $\whK_{\cO(S)}\subset U$.  It is equivalent to require that $U$ is Stein and that the restriction map $\cO(S)\to \cO(U)$ has dense image (see \cite[VII, A, Corollary~9]{GunningRossi}). 
%if S nonreduced, see Stein.
 It is then true that $H_k(S,U,A)=0$ for any $k>n$, any Runge open subset $U$ of~$S$, and any abelian group $A$ (this is due to Andreotti and Narasimhan \cite[Theorem~1]{AN} when $S$ has at most isolated singularities, and the general case was proved by Col\textcommabelow{t}oiu and Mihalache \cite[Theorem~1]{CM}).

\subsection{Relative cohomology of Runge pairs}
\label{parintroRunge}

Let $U\subset S$ be a Runge open subset. Combined with the universal coefficient theorem, the  Andreotti--Narasimhan and Col\textcommabelow{t}oiu--Mihalache theorems stated at the end of \S\ref{Artin3} show that $H^k(S,U,A)=0$ for any $k>n+1$ and any abelian group $A$. This naturally raises the question of what happens when $k=n+1$ (and, say,  $S$ is nonsingular and~$A=\Z$). 
 
\begin{quest}
\label{qRunge}
Let $S$ be a Stein manifold of dimension $n$ and let $U\subset S$ be a Runge open subset.  Is it true that $H^{n+1}(S,U,\Z)=0$?
\end{quest}

Applying the universal coefficient theorem as above shows that Question \ref{qRunge} has a positive answer when $H_n(S, U, \Z)$ is a free abelian group.

When $n=1$, Question \ref{qRunge} is stated explicitely as an open problem by Col\textcommabelow{t}oiu in \cite[Problem 4]{Coltoiupb}. When $n$ is arbitrary,  it was also raised by Scholze in \cite{Scholze}.  
The first goal of this article is to answer this question. 

Say that a connected orientable~$\ci$ surface~$S$ has \textit{finite genus} if it can be exhausted by connected compact $\ci$ subsurfaces with boundary $(S_i)_{i\geq 1}$ whose genera $g(S_i)=1-\frac{\chi(S_i)+b_0(\partial S_i)}{2}$ (where $\chi$ is the Euler characteristic and $b_0$ denotes the number of connected components) are bounded.
By Richards' classification of possibly noncompact $\ci$ surfaces \cite[Theorem 3]{Richards}, it is equivalent to require that~$S$ can be realized as an open subset of a connected compact orientable $\ci$ surface.

Our main two results concerning Question \ref{qRunge} in dimension $1$ are the following.

\begin{thm}[Corollary \ref{corcoho}]
\label{th1a}
Let $S$ be a connected Stein manifold of dimension~$1$ that has finite genus. Let $U\subset S$ be a Runge open subset. Then $H^k(S,U,A)=0$ for any~$k>1$ and any abelian group~$A$.
\end{thm}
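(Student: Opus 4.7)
The plan is to combine the Col\textcommabelow{t}oiu--Mihalache theorem from \S\ref{Artin3} with the universal coefficient theorem, reducing to a freeness statement for $H_1(S,U;\Z)$, and then to prove this freeness by producing a one-dimensional CW model of the pair $(S,U)$ using the finite-genus hypothesis and the description of Runge open subsets in dimension one.

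First, applying the Col\textcommabelow{t}oiu--Mihalache theorem with $A=\Z$ gives $H_j(S,U;\Z)=0$ for all $j\geq 2$. The universal coefficient formula
\[
H^k(S,U;A)\cong\Hom(H_k(S,U;\Z),A)\oplus\Ext^1(H_{k-1}(S,U;\Z),A)
\]
then yields $H^k(S,U;A)=0$ for $k\geq 3$ immediately, and for $k=2$ reduces the statement to showing that $H_1(S,U;\Z)$ is free abelian, so that $\Ext^1(H_1(S,U;\Z),A)=0$ for every abelian group $A$.

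To prove this freeness, I would construct a CW pair $(K,L)$ with $K$ a one-dimensional CW complex and $L\subset K$ a subcomplex, such that $(S,U)$ is homotopy equivalent to $(K,L)$. The relative cellular chain complex is then concentrated in degrees $0$ and $1$, so
\[
H_1(K,L;\Z)=\ker\!\left(C_1(K)/C_1(L)\to C_0(K)/C_0(L)\right)
\]
is a subgroup of a free abelian group, hence free. For the construction, use Richards' theorem (\S\ref{parintroRunge}) to realize $S$ as an open subset of a closed orientable $\ci$-surface $\overline{S}$ of some finite genus $g$, and choose a compact subsurface-with-boundary $K_0\subset S$ of genus $g$ whose complement $S\setminus\mathrm{int}(K_0)$ is a disjoint union of planar ends. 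Extend this to an exhaustion $S=\bigcup_iK_i$ by subsurfaces of the same genus, and produce compatible spines $\Gamma_i\subset K_i$. The Runge hypothesis---equivalent in dimension one to the statement that $S\setminus U$ has no connected component relatively compact in $S$---then allows one to refine each $\Gamma_i$ so that $\Gamma_i\cap U$ is a deformation retract of $U\cap K_i$. Passing to the colimit yields the pair $(K,L)$.

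The main obstacle is this last geometric step, in which the finite-genus assumption is used essentially: without it, wild topology at infinity (for instance, an infinite-genus pattern accumulating at an end of $S$) can obstruct a simultaneous one-dimensional CW model for $S$ and~$U$. Finite genus confines all handles of $S$ to the compact core $K_0$ and makes the ends planar; in that planar setting the Runge condition admits the classical description in terms of bounded components of complements, which is precisely what enables the compatible retraction of $U\cap(S\setminus K_0)$ onto a subgraph of a planar spine.
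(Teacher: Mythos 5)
Your first reduction is exactly the one the paper uses: Col\textcommabelow{t}oiu--Mihalache gives $H_k(S,U,\Z)=0$ for $k\geq 2$, and the universal coefficient theorem then reduces everything to showing that $H_1(S,U,\Z)$ is a free abelian group. That part is correct. The gap is in your proof of this freeness. You propose to produce a one-dimensional CW pair $(K,L)$ modelling $(S,U)$ and to read off freeness from the cellular chain complex, but the existence of such a model is precisely where the whole difficulty lies, and your sketch does not establish it. Concretely: for any CW pair $(K,L)$ with $K$ one-dimensional, the long exact sequence forces $\Coker\big(H_1(L,\Z)\to H_1(K,\Z)\big)$ to embed in the free group $H_1(K,L,\Z)$, hence to be free; so asserting the existence of the model already presupposes that $\Coker\big(H_1(U,\Z)\to H_1(S,\Z)\big)$ is free, which is the hard content of the theorem (and which genuinely fails for infinite genus, by the paper's Theorem \ref{thdim1b}, so no soft argument can give the model). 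Moreover, the specific construction you outline --- refining a spine $\Gamma_i$ of $K_i$ so that $\Gamma_i\cap U$ is a deformation retract of $U\cap K_i$ --- cannot work as stated even locally: a Runge open set $U$ may have components (say, small discs) contained in the interior of a two-cell of $K_i\setminus\Gamma_i$, which meet no spine at all; and $U$ need not be tame enough for any subgraph of $S$ to serve as a simultaneous deformation retract. The appeal to planarity of the ends and to the ``bounded complementary components'' description of Runge sets does not address this.

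For comparison, the paper proves the freeness of $H_1(S,U,\Z)$ (Theorem \ref{thdim1a}, valid for an \emph{arbitrary} open subset $U$, no Runge hypothesis needed) by compactifying $S$ into a closed surface $\Sigma$ via Richards' classification and splitting $\Coker\big(H_1(U,\Z)\to H_1(S,\Z)\big)$ into two pieces by a diagram chase: a subquotient of the finitely generated group $H_1(\Sigma,\Z)$, handled by showing $H_1(\Sigma,S,\Z)$ and $H_1(\Sigma,U,\Z)$ are torsion-free (via the identification $H_2(\Sigma,S,\Z)\cong H^0(\Sigma\setminus S,\Z)$ with \emph{sheaf} cohomology of the compact set $\Sigma\setminus S$); and a cokernel of a map of groups of locally constant $\Z$-valued functions on compact sets, which is free by N\"obeling's theorem on Specker groups. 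Some input of this analytic/infinitary flavour (N\"obeling's criterion or an equivalent) appears unavoidable; if you want to pursue your route, you would first have to prove that the cokernel above is free, at which point the CW model becomes superfluous.
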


\begin{thm}[Corollary \ref{ex1}]
\label{th1b}
There exist a connected Stein manifold~$S$ of dimension $1$ and a Runge open subset $U\subset S$ such that $H^2(S,U,\Z)\neq 0$.
\end{thm}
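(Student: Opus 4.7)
The plan is to reduce the statement to a homological assertion and then to construct a Runge pair with non-free relative homology on an open Riemann surface of infinite genus.

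The reduction uses the universal coefficient theorem. Since $S$ is a Stein manifold of dimension $1$, the Col\textcommabelow{t}oiu--Mihalache theorem recalled at the end of \S\ref{Artin3} gives $H_k(S,U,\Z)=0$ for all $k\geq 2$; in particular $H_2(S,U,\Z)=0$. The universal coefficient theorem then provides a canonical isomorphism
\[
H^2(S,U,\Z)\;\cong\;\Ext^1\bigl(H_1(S,U,\Z),\,\Z\bigr),
\]
while the long exact sequence of the pair $(S,U)$ identifies $H_1(S,U,\Z)$ with $\Coker\bigl(i_*\colon H_1(U,\Z)\to H_1(S,\Z)\bigr)$. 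It therefore suffices to produce a Runge pair $(S,U)$ in dimension $1$ for which $\Coker(i_*)$ is nonzero and not a free abelian group, since $\Ext^1(G,\Z)\neq 0$ for every such~$G$.

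To realize this, I would take $S$ to be an open Riemann surface of infinite genus (automatically Stein by Behnke--Stein), with chosen generators $e_1,e_2,\ldots$ of a free subgroup of $H_1(S,\Z)$ given by meridians of an infinite sequence of disjoint handles of $S$. I would then build $U$ as $S\setminus L$, where $L\subset S$ is a closed, properly embedded $1$-dimensional subset (a union of arcs) designed so that (i) every connected component of $L$ is non-compact in $S$, which ensures $U$ is Runge; (ii) no individual meridian $e_n$ lies in $i_*H_1(U,\Z)$, because $L$ severs every small loop around the $n$\textsuperscript{th} handle; and (iii) for each $n$, a ``linked'' loop $c_n\subset U$ exists whose homology class in $H_1(S,\Z)$ is $e_n-2e_{n+1}$ (one wrap around the $n$\textsuperscript{th} handle and two wraps in the opposite direction around the $(n+1)$\textsuperscript{st}). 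In $\Coker(i_*)$ one then has $e_n=2 e_{n+1}$ for all~$n$, so the image of $e_1$ is nonzero but $2^k$-divisible for every $k\geq 0$; hence $\Coker(i_*)$ is not free abelian (a direct computation identifies it with $\Z[1/2]$).

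The main obstacle is the geometric design of $L$: one has to arrange the arcs so as to simultaneously sever the individual meridians (killing the classes $e_n$ in $i_*H_1(U,\Z)$) while keeping intact the more complicated twisted loops (realizing the classes $e_n-2e_{n+1}$), and to do so with all components of $L$ escaping to infinity so that the Runge condition is preserved. The infinite-genus structure of $S$ is used in an essential way; I expect to achieve the construction by an explicit iterative prescription, building the handles and the arcs of $L$ in tandem and carefully tracking how each new arc interferes with the cycle structure.
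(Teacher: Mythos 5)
Your reduction is essentially the paper's: by Col\textcommabelow{t}oiu--Mihalache and the universal coefficient theorem it suffices to produce a $1$\nobreakdash-dimensional Runge pair $(S,U)$ for which $\Coker\bigl(i_*\colon H_1(U,\Z)\to H_1(S,\Z)\bigr)$ is not free, and the target relations $e_n=2e_{n+1}$ on an infinite-genus surface are exactly those used in the paper. (Two small caveats: $H_1(S,U,\Z)$ equals $\Coker(i_*)$ only when $H_0(U,\Z)\to H_0(S,\Z)$ is injective --- in general $\Coker(i_*)$ is merely a subgroup of it, which still suffices --- and ``$\Ext^1(G,\Z)\neq 0$ for every nonzero non-free $G$'' requires $G$ countable, by Stein's theorem; both points are harmless here.)

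The genuine gap is the construction of $U$, which you defer, and the shape you propose --- $U=S\setminus L$ with $L$ a closed, properly embedded union of non-compact arcs --- provably cannot work. Such an $L$, read as a closed $1$\nobreakdash-submanifold with all components non-compact, is a locally finite disjoint union of copies of $\R$; excising the complement of a regular neighborhood $N(L)=\sqcup_j N(L_j)$ gives
\begin{equation*}
H_1(S,U,\Z)\;\cong\;\bigoplus_j H_1\bigl(\mathring{N}(L_j),\,\mathring{N}(L_j)\setminus L_j,\,\Z\bigr)\;\cong\;\bigoplus_j\Z,
\end{equation*}
a free abelian group; since $\Coker(i_*)$ embeds into $H_1(S,U,\Z)$ by the long exact sequence of the pair, it is then free as well and $H^2(S,U,\Z)=0$. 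This is precisely the content of Remark \ref{rems1} (ii)--(iii): any $U$ that works cannot be (up to homotopy) the complement of a closed subsurface with boundary, so no locally finite system of arcs or curves can produce it, and allowing the arcs to accumulate is exactly where all the difficulty sits --- you give no indication of how to do this while still verifying your conditions (ii) and (iii). Relatedly, (ii) is not established by ``severing small loops around the $n$\textsuperscript{th} handle'': one must control the entire image $i_*H_1(U,\Z)$, and the class $e_n$ has representatives far from the $n$\textsuperscript{th} handle. The paper sidesteps all of this by making $U$ small rather than large: $U$ is a (necessarily non-locally-finite) disjoint union of open annular neighbourhoods of simple closed curves $\lambda_i$ in the classes $\alpha_{i-1}-2\alpha_i$, constructed inductively in the complement of the previous annuli via the Meeks--Patrusky realization theorem; then $H_1(U,\Z)$ is visibly free on the $[\lambda_i]$ with injective image $\langle\alpha_{i-1}-2\alpha_i\rangle$, and Runge-ness follows from this injectivity by Behnke--Stein.
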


From a $1$-dimensional counterexample to Question \ref{qRunge} given by Theorem \ref{th1b}, one gets an $n$-dimensional one by multiplying both $S$ and $U$ with $(\C^*)^{n-1}$ and using the K\"unneth formula (see \eg \cite[Theorem 3.16]{Hatcher}). We therefore obtain the next corollary.

\begin{cor}
\label{higherdim}
Fix $n\geq 1$. There exist a Stein manifold $S$ of dimension $n$ and a Runge open subset $U\subset S$ such that $H^{n+1}(S,U,\Z)\neq 0$.
\end{cor}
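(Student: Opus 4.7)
The plan is to promote the one-dimensional counterexample of Theorem~\ref{th1b} to dimension $n$ by taking a product with $(\C^*)^{n-1}$, exactly as the paragraph preceding the corollary suggests. Let $(S_0,U_0)$ be a pair as in Theorem~\ref{th1b}, so that $S_0$ is a connected Stein manifold of dimension~$1$, the open subset $U_0\subset S_0$ is Runge, and $H^2(S_0,U_0,\Z)\neq 0$. Set $S:=S_0\times (\C^*)^{n-1}$ and $U:=U_0\times (\C^*)^{n-1}$. Since products of Stein manifolds are Stein, $S$ is a Stein manifold of dimension~$n$.

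The first thing to check is that $U$ is Runge in $S$. Given a compact subset $K\subset U$, let $K_1$ and $K_2$ be its images under the two projections, so that $K\subset K_1\times K_2$ with $K_1\subset U_0$ and $K_2\subset (\C^*)^{n-1}$ compact. Using polynomial approximation of holomorphic functions on $S_0\times (\C^*)^{n-1}$ by sums of products $f(s)g(t)$ with $f\in \cO(S_0)$ and $g\in \cO((\C^*)^{n-1})$, one sees that the $\cO(S)$\nobreakdash-convex hull of $K_1\times K_2$ is contained in $\widehat{K_1}_{\cO(S_0)}\times \widehat{K_2}_{\cO((\C^*)^{n-1})}$. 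Since $U_0$ is Runge in $S_0$ and $(\C^*)^{n-1}$ is Runge in itself, this latter product lies in $U$, hence $\widehat{K}_{\cO(S)}\subset U$, which is the Runge condition.

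It remains to show $H^{n+1}(S,U,\Z)\neq 0$. The pair $(S,U)$ is the product of pairs $(S_0,U_0)\times((\C^*)^{n-1},\emptyset)$, and since $(\C^*)^{n-1}$ has the homotopy type of the compact manifold $(S^1)^{n-1}$, its integral cohomology is finitely generated and torsion-free. The topological K\"unneth formula therefore yields an isomorphism
\[
H^{n+1}(S,U,\Z)\;\cong\;\bigoplus_{p+q=n+1} H^p(S_0,U_0,\Z)\otimes_{\Z} H^q\bigl((\C^*)^{n-1},\Z\bigr),
\]
with no Tor contribution. The summand with $p=2$ and $q=n-1$ equals $H^2(S_0,U_0,\Z)\otimes \Z\cong H^2(S_0,U_0,\Z)$, which is nonzero by Theorem~\ref{th1b}. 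Hence $H^{n+1}(S,U,\Z)\neq 0$, as required.

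The only step with any content is the verification that $U$ is Runge in $S$; the cohomological computation reduces purely formally to Theorem~\ref{th1b} via K\"unneth. The Runge property in a product, while standard, is the one place where a reader might want a reminder, and I would either supply the polynomial approximation argument above or invoke the characterization of Runge open subsets via density of $\cO(S)\to \cO(U)$ recalled in~\S\ref{Artin3}, combined with the fact that $\cO(S_0)\otimes \cO((\C^*)^{n-1})$ is dense in $\cO(S_0\times(\C^*)^{n-1})$ for the topology of uniform convergence on compacta.
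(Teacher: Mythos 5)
Your proof follows exactly the route the paper takes: the paper's entire justification for Corollary~\ref{higherdim} is the single sentence preceding it, namely multiply the pair of Theorem~\ref{th1b} by $(\C^*)^{n-1}$ and apply the K\"unneth formula. Your added verifications --- that $U_0\times(\C^*)^{n-1}$ is Runge (via containment of the $\cO(S)$-convex hull of a product in the product of the hulls, or via density of $\cO(S_0)\otimes\cO((\C^*)^{n-1})$) and that the K\"unneth formula applies because $H^*((\C^*)^{n-1},\Z)$ is finitely generated and free --- are correct, so this is a sound, slightly more detailed write-up of the same argument.
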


As pointed out by Col\textcommabelow{t}oiu in \cite[Problem 4]{Coltoiupb},  
Question \ref{qRunge} is of topological nature when $n=1$. As a consequence, so are our proofs of Theorems~\ref{th1a} and~\ref{th1b}.

The purely topological statement from which Theorem \ref{th1a} derives is the freeness of the abelian group $H_1(S,U,\Z)$ for any connected orientable $\ci$ surface~$S$ of finite genus and any open subset~$U$ of~$S$ (see Theorem \ref{thdim1a} below).  This statement is more delicate than the freeness of $H_1(S,\Z)$ itself (a consequence of the existence of a triangulation of~$S$,  for which the finite genus hypothesis is not required).  Our proof makes use of N\"obeling's freeness criterion for abelian groups \cite{Nobeling}.

 As for Theorem \ref{th1b}, we let $S$ be homeomorphic to the simplest connected orientable surface of infinite genus (the one which has only one end), and we choose~$U$ to be a small tubular neighbourhood of a (non locally finite; see Remark \ref{rems1} (iii)) disjoint union of simple closed curves on $S$.  Our construction is
based on the complete understanding, due to Meeks and Patrusky \cite{MP},  of which homology classes of a compact $\ci$ surface with boundary can be realized by a simple closed curve.
%https://www3.nd.edu/~andyp/notes/SimpleClosedCurves.pdf
%cf also MO153402

\iffalse
It is not difficult to use Theorem \ref{th1b} to construct counterexamples to Question~\ref{qRunge} in any di\-men\-sion~${n\geq 1}$ (see Corollary~\ref{higherdim} below).  In dimension $1$, Theorem \ref{th1a} shows that Question \ref{qRunge} has a positive answer if the topology of the Stein manifold~$S$ is not too complicated.  In constrast, we show that Question \ref{qRunge} fails to hold for very simple Stein manifolds, already in dimension $2$.

\begin{thm}[Theorem \ref{thRunge2}]
\label{th2}
There exists a Runge open subset $U\subset \C^2$ such that $H^3(\C^2,U,\Z)\neq 0$.
\end{thm}

To prove Theorem \ref{th2}, one must construct a Runge open subset $U\subset \C^2$ with sufficiently complicated topology. More precisely, our goal will be to ensure that the abelian group~$H_1(U,\Z)$ contains a direct factor isomorphic to $\Q$. 
Our proof is an application of Eliashberg's techniques to construct Stein manifolds with prescribed topology (\cite{Eliashberg}, see also the monograph \cite{CE} of Cieliebak and Eliashberg).
\fi

\subsection{\texorpdfstring{$\Gal(\C/\R)$}{Gal(C/R)}-equivariant Artin vanishing for Stein spaces}

In applications to real-analytic or real algebraic geometry, it is important to work with complex spaces $S$ equipped with an action of the 
Galois group ${G:=\Gal(\C/\R)\simeq\Z/2}$ such that the complex conjugation $\sigma\in G$ acts $\C$-antilinearly on the structure sheaf~$\cO_S$. We call them $G$-\textit{equivariant complex spaces} (see \eg \cite[Appendix~A]{Tight}). 
Examples include complex manifolds endowed with an antiholomorphic involution.

In this setting, the natural cohomology groups to consider are $G$-equivariant sheaf cohomology groups $H^k_G(S,\F)$ (in the sense of \cite[\S 5.2]{Tohoku}) with values in a weakly con\-structible $G$\nobreakdash-equivariant sheaf $\F$ (such as the constant $G$-equivariant sheaf associated with a $G$-module).

When $S^G\neq\varnothing$, one cannot expect any vanishing result for these groups, already for $\F=\Z/2$. Indeed the pull-back morphism $\Z/2\simeq H^k_G(\pt,\Z/2)\to H^k_G(S,\Z/2)$ admits a retraction given by restriction to a $G$-fixed point, and hence is nonzero. To extend to $G$-equivariant Stein geometry the Artin vanishing theorems considered in~\S\ref{parArtin}, one must instead consider $G$\nobreakdash-equivariant cohomology groups of a $G$\nobreakdash-equivariant Stein space $S$ relative to the fixed locus $S^G$. 

 In the algebraic setting and with torsion coefficients, such a theorem was obtained by Scheiderer in \cite[\S 18]{Scheiderer} (see especially \cite[Theorem 18.2, Corollary~18.11]{Scheiderer}).
The second aim of this article is to extend Scheiderer's theorem to the $G$-equivariant Stein setting, with possibly nontorsion weakly constructible coefficients, and relative to an arbitrary $G$-invariant~$\cO(S)$\nobreakdash-convex compact subset of~$S$. This yields $G$\nobreakdash-equivariant analogues of the cohomological statements considered in~\S\ref{parArtin} (thereby generalizing them, see Remark~\ref{remArtinG}~(i)).

\begin{thm}[Theorem \ref{thArtin}]
\label{th3}
Let $S$ be a $G$-equivariant Stein space of dimension $n$. Let $K\subset S$ be a $G$-invariant~$\cO(S)$\nobreakdash-convex compact subset. Let $\F$ be a $G$\nobreakdash-equivariant weakly constructible sheaf on $S$. Then $H^k_G(S,S^G\cup K,\F)=0$ for $k>n$.
\end{thm}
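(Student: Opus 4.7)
The plan is to reduce Theorem~\ref{th3} to a non-equivariant Artin vanishing on the quotient $S/G$ via descent along the projection $\pi\colon S\to S/G$, exploiting the fact that $G$ acts freely off the fixed locus.

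\textbf{Step 1: Descent.} Set $\cG := j_!(\F|_{S\setminus(S^G\cup K)})$, where $j\colon S\setminus(S^G\cup K)\hookrightarrow S$ is the open inclusion, so that $H^k_G(S, S^G\cup K, \F) = H^k_G(S, \cG)$. The sheaf $\cG$ vanishes on $S^G$, hence its support lies in $S\setminus S^G$, where $G$ acts freely. The stalk of $R^q\pi_*^G\cG$ at $y\in S/G$ is computed by $H^q(G_y, \cG_s)$ for $s\in\pi^{-1}(y)$: this vanishes for $q\ge 1$ because at a free orbit the stabilizer $G_y$ is trivial, and at a fixed orbit the stalk $\cG_s$ is zero. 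The Leray spectral sequence for $\pi$ therefore collapses, yielding
\[
H^k_G(S, S^G\cup K, \F) \;\cong\; H^k(S/G,\,\pi_*^G\cG).
\]

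\textbf{Step 2: Artin vanishing on the quotient.} The core step is to show that $H^k(S/G, \pi_*^G\cG)=0$ for $k>n$. The sheaf $\pi_*^G\cG$ is weakly constructible on $S/G$ for the push-forward of a $G$-invariant subanalytic stratification refining the one for $\F$, and it is supported in $(S\setminus S^G)/G\subset S/G$. Lifting a $G$-invariant strictly plurisubharmonic exhaustion $\phi\colon S\to\R$ (obtained by averaging any Stein exhaustion over $G$) to a continuous exhaustion $\bar\phi\colon S/G\to\R$, I would run a stratified Morse-theoretic argument in the spirit of Proposition~\ref{propArtin}: at non-fixed critical points of $\phi$, which come in $G$-pairs and have Morse index $\le n$ by Andreotti--Frankel, the handles descend to cells of dimension $\le n$ in $S/G$; at fixed critical points $p\in S^G$, a local-model computation in a $G$-invariant chart near $p$ (where $\phi$ is a real quadratic form in $(z,\bar z)$ with $S^G=\R^n$) shows that the handle attachment, combined with the vanishing of $\pi_*^G\cG$ on $S^G$, contributes nothing to $H^k(S/G, \pi_*^G\cG)$ for $k>n$.

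\textbf{Main obstacle.} The main difficulty is the stratified Morse theory on the singular real-analytic quotient $S/G$, particularly the local analysis at the image $\pi(p)\in S/G$ of a fixed critical point $p\in S^G$. Because $\pi_*^G\cG$ vanishes on $S^G\subset S/G$, the handle attached at $\pi(p)$ contributes to cohomology only through its complement to $S^G$; verifying that this contribution vanishes above degree $n$ requires a careful local computation using the $G$-equivariant quadratic-form structure of $\phi$ near $p$ and the subanalytic weak constructibility of $\pi_*^G\cG$ inherited from $\F$. A secondary technical point is checking that the subanalytic stratification on $S/G$ obtained by pushing forward a $G$-invariant stratification on $S$ is compatible with both $\bar\phi$ and $\pi_*^G\cG$, which is ultimately a matter of refining stratifications in the framework of Kashiwara--Schapira.
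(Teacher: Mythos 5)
Your Step 1 is correct and coincides with the identification the paper itself uses: since the stalks of $j_!j^*\F$ vanish along $S^G$, the spectral sequence of \cite[Th\'eor\`eme 5.2.1]{Tohoku} degenerates and $H^k_G(S,S^G\cup K,\F)\cong H^k(S/G,(\pi_*j_!j^*\F)^G)$. The genuine gap is Step 2, which is where the entire content of the theorem lives and which you explicitly leave as a ``main obstacle''. Running stratified Morse theory directly on $S/G$ with the descended exhaustion $\bar\phi$ is not a routine adaptation: $S/G$ carries no complex structure near $\pi(S^G)$ (and for $n\geq 2$ the local model $\C^n/\mathrm{conj}$ is not even a smooth manifold), so the Andreotti--Frankel index bound does not apply to critical points on $S^G$, and the ``careful local computation'' you defer --- showing that the local Morse data at a fixed critical point contributes nothing above degree $n$ for an arbitrary weakly constructible $\F$ on a possibly singular $S$ --- is essentially a local form of the theorem itself. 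Nothing in your sketch supplies it, and there is no off-the-shelf stratified Morse theory for the real-analytic quotient with a merely continuous $\bar\phi$. (You also never address how $K$ enters the Morse-theoretic setup, e.g.\ the reduction to $K$ being a sublevel set.)

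It is worth noting that the paper is organized precisely to avoid this difficulty. It never does Morse theory near $S^G$ or on $S/G$: instead it first proves the \emph{non}-equivariant vanishing $H^k(S,S^G\cup K,\F)=0$ for $k>n$ (Proposition \ref{propArtin2}) by exhausting $S^G$ by compacta $L$ and invoking Lemma \ref{lemCarleman}, which says $L\cup K$ is again $\cO(S)$-convex, so that only Proposition \ref{propArtin} and limit arguments are needed. It then passes to $G$-equivariant cohomology not by Morse theory on the quotient but by the twisting exact sequences (\ref{rc1})--(\ref{rc2}), which give injections $H^k_G\hookrightarrow H^{k+2i}_G$, combined with the finite covering dimension of $S/G$ (Lemma \ref{lemvandim}) to kill $H^{k+2i}_G$ for $i\gg 0$. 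If you want to salvage your plan, you should replace Step 2 by an argument of this shape; as written, the proposal reduces the theorem to an unproved statement of comparable difficulty.
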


In the algebraic case and with torsion coefficients, Scheiderer's proof extends the arguments of Artin in \cite[XIV, \S 4]{SGA43} and proceeds by d\'evissage and induction on the dimension. In contrast,  our proof is ultimately based on (non-$G$-equivariant) stratified Morse theory.  To handle the contribution of $S^G$, we heavily rely on the flexibility arising from the possibility of changing the $\cO(S)$-compact subset $K$.

Let us comment on the relation between Theorem \ref{th3} and the results of \S\ref{parintroRunge}.  When proving Artin-type vanishing theorems for relative cohomology groups, the articles~\cite{AN} and \cite{CM} highlight the role of Runge open subsets.  Following their point of view,  one could try to prove Theorem \ref{th3} by showing that $S^G\cup K$ admits a basis of Runge open neighborhoods (which can be deduced from \cite[Corollary~3.2]{MWO} and Lemma \ref{lemCarleman}),  and by eventually applying a vanishing theorem for the relative cohomology of Runge pairs.  This line of reasoning led us to Question~\ref{qRunge}. The counterexamples given by Corollary \ref{higherdim} compelled us to find another approach,  emphasizing more the role of $\cO(S)$-convex compact subsets, and relying more heavily on particular properties of the subset $S^G$ of $S$ (see Lemma~\ref{lemCarleman}).

Applications of Theorem \ref{th3} to the arithmetic of fields of meromorphic functions on (possibly $G$-equivariant) Stein spaces, which were our main motivation for the present work,  will appear in subsequent articles (see \cite{Steinsurface}).

\section{Relative cohomology of Runge pairs}

In this section, we prove Theorem \ref{th1a} (in \S\ref{11}) and Theorem \ref{th1b} (in \S\ref{12}).
All homology groups may be thought of as singular homology groups, but cohomology groups are always understood to be sheaf cohomology groups (which may not coincide with singular cohomology groups).

\subsection{Surfaces of finite genus}
\label{11}

We start with a lemma.

\begin{lem}
\label{lemhomorel}
Let $\Sigma$ be a compact orientable $\ci$ surface. Let $S\subset\Sigma$ be an open subset. Then
\begin{enumerate}[(i)]
\item the group $H_2(\Sigma,S,\Z)$ identifies with the sheaf cohomology group $H^0(\Sigma\setminus S,\Z)$;
\item the group $H_1(\Sigma,S,\Z)$ is torsion-free.
\end{enumerate}
\end{lem}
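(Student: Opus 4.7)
The plan is to deduce both assertions from Poincar\'e--Lefschetz duality applied to the closed subset $F:=\Sigma\setminus S$ of the compact oriented topological $2$-manifold $\Sigma$. In the form valid for an arbitrary closed subset of an oriented manifold (see, \eg, Chapter VI of Bredon's \emph{Topology and Geometry}), duality provides a natural isomorphism
\[
H_k(\Sigma,S,\Z) \isoto \check H^{2-k}(F,\Z)
\]
for every $k$, where the right-hand side is \v Cech cohomology. Part (i) then amounts to the identification $\check H^0(F,\Z)=H^0(F,\Z)$, which holds for the constant sheaf $\Z$ on any compact Hausdorff space, while (ii) is reduced to showing that $\check H^1(F,\Z)$ is torsion-free.

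For this last task I would invoke continuity of \v Cech cohomology, which applies because $F$ is a compact subset of the manifold $\Sigma$, to write
\[
\check H^1(F,\Z)=\varinjlim_{U}\, H^1(U,\Z),
\]
the colimit running over open neighborhoods $U\subset\Sigma$ of $F$. Each such $U$ is an open orientable $\ci$ subsurface of $\Sigma$; every connected component of $U$ is either a closed orientable surface (when it coincides with a whole connected component of $\Sigma$), whose $H^1$ is free of rank twice the genus, or a non-compact connected orientable surface, which has the homotopy type of a $1$-dimensional CW complex (a graph) and hence also has free $H^1$. So $H^1(U,\Z)$ is free abelian for each $U$, and a filtered colimit of torsion-free abelian groups is torsion-free, completing the argument.

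The main technical point is marshalling the correct form of Poincar\'e--Lefschetz duality: since $F$ can be a very irregular closed subset (not locally a CW complex, for instance), one must use the \v Cech (equivalently sheaf) cohomology of $F$ rather than singular cohomology, and the same remark underlies the continuity identity $\check H^1(F,\Z)=\varinjlim_{U} H^1(U,\Z)$ used above. Once these two ingredients are in place, the argument proceeds as outlined.
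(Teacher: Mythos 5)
Your argument is correct, but it takes a genuinely different route from the paper's, so a comparison is in order. You invoke the general Poincar\'e--Alexander--Lefschetz duality $H_k(\Sigma,S,\Z)\cong\check H^{2-k}(F,\Z)$ for the compact set $F=\Sigma\setminus S$, together with tautness of $F$ in $\Sigma$. The paper instead proves only the degree-$2$ case of this duality, and by hand: it exhausts $S$ by compact subsurfaces with boundary and combines excision, Lefschetz duality for surfaces with boundary, and continuity of sheaf cohomology. For (ii) the paper avoids $\check H^1(F,\Z)$ altogether, using the coefficient exact sequence $H_2(\Sigma,S,\Z)\to H_2(\Sigma,S,\Z/m)\to H_1(\Sigma,S,\Z)\xrightarrow{\,m\,}H_1(\Sigma,S,\Z)$ and the surjectivity of the reduction map $H^0(F,\Z)\to H^0(F,\Z/m)$, which follows from (i) and its mod-$m$ analogue. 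Your approach buys a uniform statement in all degrees at the cost of a heavier black box; the paper's is more self-contained. One small inaccuracy in your write-up: $H^1(U,\Z)$ need not be \emph{free} for an open subset $U\subset\Sigma$. A component of $U$ may have infinitely generated $H_1$ (e.g.\ the complement of a Cantor set in $\bS^2$), in which case $H^1(U,\Z)\cong\Hom(H_1(U,\Z),\Z)$ is a Baer--Specker-type group, torsion-free but not free; likewise infinitely many components force an infinite product. Since $H_0(U,\Z)$ is free, the universal coefficient theorem does give $H^1(U,\Z)\cong\Hom(H_1(U,\Z),\Z)$, which is torsion-free, and your final step only uses torsion-freeness, so the argument goes through as written once ``free'' is weakened to ``torsion-free''.
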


\begin{proof}
Write $S$ as an increasing union of compact $\ci$ subsurfaces with boundary~$(S_i)_{i\geq 1}$. Define $\mathring{S}_i:=S_i\setminus \partial S_i$.  To prove (i), one then computes
\begin{equation}
\label{Lefschetz}
\begin{alignedat}{3}
H_2(\Sigma,S,\Z)&=\varinjlim_{i\geq 1}H_2(\Sigma, S_i,\Z)=\varinjlim_{i\geq 1}H_2(\Sigma\setminus \mathring{S}_i, \partial S_i,\Z)\\
&=\varinjlim_{i\geq 1}H^0(\Sigma\setminus \mathring{S}_i,\Z)=H^0(\Sigma\setminus S,\Z).
\end{alignedat}
\end{equation}
The first equality in (\ref{Lefschetz}) holds since the singular complex of the pair $(\Sigma,S)$ is the inductive limit of the singular complexes of the pairs $(\Sigma, S_i)$.  The second equality follows from excision,  and the third from Lefschetz duality. In turn, the fourth equality  of (\ref{Lefschetz}) results from \cite[Lemma 14.2]{Bredon}
as $\Sigma\setminus S=\varprojlim_{i\geq 1} \Sigma\setminus \mathring{S}_i$.

Let $m\geq 1$ be an integer. Consider the exact sequence
\begin{equation}
\label{multm}
H_2(\Sigma,S,\Z)\to H_2(\Sigma,S,\Z/m)\to H_1(\Sigma,S,\Z)\xrightarrow{m} H_1(\Sigma,S,\Z).
\end{equation}
In view of (i), the left arrow of (\ref{multm}) identifies with the reduction modulo $m$ morphism $H^0(\Sigma\setminus S,\Z)\to H^0(\Sigma\setminus S,\Z/m)$. As this morphism is surjective, the right arrow of (\ref{multm}) is injective. Consequently, the group $H_1(\Sigma,S,\Z)$ has no $m$-torsion.  Assertion (ii) follows since $m$ is arbitrary.
\end{proof}

\begin{rem}
Lemma \ref{lemhomorel} (i),  and in particular the role sheaf cohomology (as opposed to singular cohomology) plays in it, was inspired to us by Sitnikov's generalization of the Alexander duality theorem (\cite{Sitnikov}, 
%pas eu accès
see also \cite[bottom of p.~80]{Milnor}).
%see also [Massey, How to give an exposition...]
\end{rem}

\begin{thm}
\label{thdim1a}
Let $S$ be a connected orientable $\ci$ surface of finite genus.  Let~$U$ be an open subset of $S$. Then $H_1(S,U,\Z)$ is a free abelian group.
\end{thm}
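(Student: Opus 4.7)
The plan is to embed $S$ via Richards' classification as an open subset of a compact connected orientable $\ci$ surface $\Sigma$, and to exploit the ambient compact surface together with Lemma~\ref{lemhomorel}. Setting $C := \Sigma \setminus S$ and $C' := \Sigma \setminus U$, the long exact sequence of the triple $(\Sigma, S, U)$ combined with the identifications of Lemma~\ref{lemhomorel}(i) yields a short exact sequence
\[
0 \to N \to H_1(S, U, \Z) \to Q \to 0,
\]
where $N := \Coker(H^0(C', \Z) \to H^0(C, \Z))$ (the map being restriction of locally constant functions) and $Q := \ker(H_1(\Sigma, U, \Z) \to H_1(\Sigma, S, \Z))$.

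I would first handle $Q$. Combining Lemma~\ref{lemhomorel}(ii) for the pair $(\Sigma, U)$ with the long exact sequence of that pair presents $H_1(\Sigma, U, \Z)$ as an extension of the quotient $H_1(\Sigma, \Z)/\Ima(H_1(U, \Z))$ (torsion-free as a subgroup of $H_1(\Sigma, U, \Z)$, hence free since $H_1(\Sigma, \Z) \cong \Z^{2g}$ is finitely generated) by the free group $\ker(H_0(U, \Z) \to H_0(\Sigma, \Z))$. Vanishing of $\Ext^1$ against free groups splits this extension, so $H_1(\Sigma, U, \Z)$ is free, and so is its subgroup~$Q$.

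The core of the argument, where N\"obeling's theorem enters, is showing that $N$ is free. Torsion-freeness of $N$ is accessible by direct inspection: given $f \in H^0(C, \Z)$ and $g \in H^0(C', \Z)$ with $g|_C = nf$, the preimage $V := g^{-1}(n\Z) \subset C'$ is a clopen subset containing $C$, and the function equal to $g/n$ on $V$ and $0$ on $C' \setminus V$ gives a continuous extension of $f$. Upgrading torsion-freeness to freeness is the \emph{main obstacle}: $H^0(C, \Z)$ and $H^0(C', \Z)$ consist of continuous $\Z$-valued functions on compact spaces, equivalently on their Stone spaces of quasi-components $\pi_0(C)$ and $\pi_0(C')$, and hence lie inside groups of bounded $\Z$-valued functions that are free by N\"obeling. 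The challenge is to exhibit the cokernel $N$ itself as a subgroup of a free abelian group --- most plausibly, as a subgroup of a group of bounded $\Z$-valued functions on a set naturally associated to the continuous map $\pi_0(C) \to \pi_0(C')$ of Stone spaces, using crucially that both spaces arise as quasi-component spaces of compact subsets of the surface $\Sigma$.

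Once $N$ is shown to be free, freeness of $Q$ makes $\Ext^1(Q, N) = 0$, so the short exact sequence splits and $H_1(S, U, \Z) \cong N \oplus Q$ is free.
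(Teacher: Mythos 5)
Your reduction is sound and essentially parallel to the paper's: the long exact sequence of the triple $(\Sigma,S,U)$ gives the same two pieces as the paper's diagram chase, your treatment of $Q$ (via Lemma~\ref{lemhomorel}~(ii), finite generation of $H_1(\Sigma,\Z)$, and the fact that subgroups of free abelian groups are free) is correct, and your direct verification that $N$ is torsion-free is fine. But the proof is not complete: you explicitly leave open the freeness of $N=\Coker\big[H^0(\Sigma\setminus U,\Z)\to H^0(\Sigma\setminus S,\Z)\big]$, and this is the heart of the theorem --- torsion-freeness is far from sufficient here, since $N$ is in general a quotient of an uncountable, non--finitely generated group (think of $\Q$ or the Baer--Specker group $\Z^{\N}$, both torsion-free and not free). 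Moreover, the route you sketch --- realizing $N$ as a \emph{subgroup} of a group of bounded $\Z$-valued functions --- is not the one that works: $N$ is a cokernel, and there is no evident embedding of it into a function group attached to the map of quasi-component spaces.

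The missing ingredient is the stronger, relative form of N\"obeling's theorem (\cite[Satz 2]{Nobeling}, \cite[Theorem 97.3]{Fuchs2}): inside the group $A$ of bounded $\Z$-valued functions on $\Sigma\setminus S$, call a subgroup \emph{Specker} if each of its elements is a finite $\Z$-linear combination of characteristic functions of subsets; then a quotient of one Specker subgroup by a smaller one is free. Both $H^0(\Sigma\setminus S,\Z)$ and the image of the restriction map from $H^0(\Sigma\setminus U,\Z)$ are Specker subgroups of $A$, because a locally constant $\Z$-valued function on a \emph{compact} space takes finitely many values, each on a clopen set, and is therefore a finite integral combination of characteristic functions. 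This applies verbatim to your $N$ and closes the gap; note that compactness of $\Sigma\setminus S$ (hence, ultimately, the finite genus hypothesis via Richards' embedding) is used exactly at this point to guarantee boundedness and the Specker property.
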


\begin{proof}
It follows from Richards' classification of possibly noncompact $\ci$ surfaces (see \cite[Theorem 3]{Richards}) that $S$ is obtained from an open subset of the sphere $\bS^2$ by attaching finitely many handles, and hence that $S$ may be realized as an open subset of a connected compact orientable $\ci$ surface $\Sigma$.
Consider the following commutative diagram of relative homology exact sequences:
\begin{equation}
\label{highg}
\begin{aligned}
\xymatrix@C=1.5em@R=3ex{
H_2(\Sigma,\Z)\ar@{=}[d]\ar^{}[r]&H_2(\Sigma,U,\Z)\ar^{c}[d]\ar^{}[r]&H_1(U,\Z)\ar^{d}[d]\ar^{a}[r]&H_1(\Sigma,\Z)\ar@{=}[d]\ar^{}[r]&H_1(\Sigma,U,\Z)\ar^{}[d] \\
H_2(\Sigma,\Z)\ar^{}[r]&H_2(\Sigma,S,\Z)\ar^{}[r]&H_1(S,\Z) \ar^{b}[r]&H_1(\Sigma,\Z)\ar^{}[r]&H_1(\Sigma,S,\Z)\rlap{.}
}
\end{aligned}
\end{equation}
A diagram chase in (\ref{highg}) yields a short exact sequence
\begin{equation}
\label{extension}
0\to \Coker(c)\to\Coker(d)\to \Ima(b)/\Ima(a)\to 0.
\end{equation}

Since $H_1(\Sigma,U,\Z)$ and $H_1(\Sigma,S,\Z)$ are torsion-free by Lemma \ref{lemhomorel} (ii), the subgroups $\Ima(a)$ and $\Ima(b)$ of the finitely generated free abelian group $H_1(\Sigma,\Z)$ are saturated. We deduce that the abelian group~$\Ima(b)/\Ima(a)$ is free (of finite type).

Applying Lemma \ref{lemhomorel} (i) to the open subsets $S$ and $U$ of $\Sigma$ yields an isomorphism 
\begin{equation}
\label{Cokerc}
\Coker(c)=\Coker\big[H^0(\Sigma\setminus U,\Z)\to H^0(\Sigma\setminus S,\Z)\big].
\end{equation}
Let $A$ be the abelian group of bounded functions $f:\Sigma\setminus S\to \Z$.  Following N\"obeling (\cite{Nobeling}, see also \cite[\S 97]{Fuchs2}), we say that a subgroup $B$ of $A$ is \textit{Specker} if every~$f\in A$ can be written as a finite sum $f=\sum_i m_if_i$, where $m_i\in\Z$ and~$f_i$ is a characteristic function of a subset of $\Sigma\setminus S$.  Since $\Sigma\setminus S$ is compact,  the subgroup~$H^{0}(\Sigma\setminus S,\Z)$ of $A$ consisting of locally constant functions is Specker.  So is the image of the restriction map $H^{0}(\Sigma\setminus U,\Z)\to H^{0}(\Sigma\setminus S,\Z)\subset A$.  The abelian group $\Coker(c)$ is a quotient of two Specker subgroups of $A$ (see (\ref{Cokerc})), and hence is free by N\"obeling's theorem (\cite[Satz 2]{Nobeling}, see also \cite[Theorem 97.3]{Fuchs2}).

We now deduce from (\ref{extension}) that the abelian group $\Coker(d)$ is free as an extension of two free abelian groups. The long exact sequence of homology of the pair $(S,U)$ gives rise to an exact sequence
\begin{equation}
\label{lastes}
0\to\Coker(d)\to H_1(S,U,\Z)\to H_0(U,\Z).
\end{equation}
As the abelian group $H_0(U,\Z)$ is free, so is the image of the right arrow of (\ref{lastes}) (see \cite[Theorem~14.5]{Fuchs1}). It follows from (\ref{lastes}) that the abelian group $H_1(S,U,\Z)$ is free as an extension of two free abelian groups. 
\end{proof}

\begin{cor}
\label{corcoho}
Let $S$ be a connected Stein manifold of dimension $1$ that has finite genus. Let $U\subset S$ be a Runge open subset.  
Then $H^k(S,U,A)=0$ for any $k>1$ and any abelian group $A$.
\end{cor}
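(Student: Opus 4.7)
The plan is to combine three ingredients: the Andreotti--Narasimhan--Col\textcommabelow{t}oiu--Mihalache homology vanishing theorem recalled at the end of \S\ref{Artin3}, the freeness statement provided by Theorem \ref{thdim1a}, and the universal coefficient theorem for cohomology.

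First I would collect two homological facts. Since $S$ is Stein of complex dimension $1$ and $U\subset S$ is Runge, the Andreotti--Narasimhan--Col\textcommabelow{t}oiu--Mihalache theorem gives $H_k(S,U,A)=0$ for every $k\geq 2$ and every abelian group $A$; in particular $H_k(S,U,\Z)=0$ for $k\geq 2$. Viewing the complex manifold $S$ as a connected orientable $\ci$ surface of finite genus, Theorem \ref{thdim1a} then yields that $H_1(S,U,\Z)$ is a free abelian group.

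Feeding these facts into the universal coefficient theorem, which for every $k\geq 0$ provides a short exact sequence
\[
0\to \Ext^1(H_{k-1}(S,U,\Z),A)\to H^k_{\sing}(S,U,A)\to \Hom(H_k(S,U,\Z),A)\to 0,
\]
both outer terms vanish: for $k\geq 3$ by the homological vanishing in both degrees, and for $k=2$ because $\Hom(H_2(S,U,\Z),A)=0$ follows from the vanishing of $H_2$ while $\Ext^1(H_1(S,U,\Z),A)=0$ follows from the freeness of $H_1$. Hence $H^k_{\sing}(S,U,A)=0$ for all $k>1$.

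The step that requires the most attention, in view of the warning at the start of \S\ref{11} that sheaf and singular cohomology need not coincide in general, is the passage from singular to sheaf cohomology. Since both $S$ and $U$ are (locally contractible) real $\ci$ manifolds, however, the two theories agree for the pair $(S,U)$ with constant coefficients --- this is the same implicit comparison already used in \S\ref{parintroRunge} to deduce $H^k(S,U,A)=0$ for $k>n+1$ from the Col\textcommabelow{t}oiu--Mihalache theorem --- and the desired conclusion follows.
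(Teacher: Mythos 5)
Your proof is correct and is essentially the paper's own argument: both combine the Col\textcommabelow{t}oiu--Mihalache vanishing $H_k(S,U,\Z)=0$ for $k\geq 2$, the freeness of $H_1(S,U,\Z)$ from Theorem \ref{thdim1a}, and the universal coefficient theorem. The only difference is that you spell out the comparison between singular and sheaf cohomology for the pair $(S,U)$, a point the paper leaves implicit; your justification (both $S$ and $U$ are manifolds, hence locally contractible and paracompact) is the right one.
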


\begin{proof}
The universal coefficient theorem provides us with a short exact sequence
$$0\to \Ext^1(H_{k-1}(S,U,\Z),A)\to H^k(S,U,A)\to\Hom(H_k(S,U,\Z),A)\to 0.$$
As $S$ is Stein of dimension $1$ and $U$ is Runge,  one has $H_k(S,U,\Z)=0$ for~$k\geq 2$ (see \eg \cite[Theorem 1]{CM}). In addition, the group $\Ext^1(H_{1}(S,U,\Z),A)$ vanishes 
as $H_{1}(S,U,\Z)$ is a free abelian group by Theorem \ref{thdim1a}. The corollary follows.
\end{proof}

\subsection{A surface of infinite genus}
\label{12}

\begin{thm}
\label{thdim1b}
There exists a noncompact connected orientable $\ci$ surface $S$ and an open subset $U\subset S$ such that the morphism $H_1(U,\Z)\to H_1(S,\Z)$ is injective with non-free cokernel.
\end{thm}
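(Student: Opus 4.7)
My plan is to take $S$ to be the ``Loch Ness monster'', i.e.\ the unique (up to homeomorphism) noncompact connected orientable $\ci$ surface of infinite genus with a single end. Fix an exhaustion $S=\bigcup_{n\ge 0} S_n$ by compact subsurfaces of genus $n$ with one boundary component, so that each $H_n:=\overline{S_n\setminus S_{n-1}}$ is a compact genus-$1$ surface with two boundary circles. Choose symplectic bases $a_n,b_n$ of $H_1(H_n,\Z)$ whose images yield the direct sum decomposition $H_1(S,\Z)=\bigoplus_{n\ge 1}(\Z a_n\oplus \Z b_n)$. I would then let $U$ be the disjoint union of thin annular neighborhoods of a sequence of simple closed curves $\gamma_n\subset S$ constructed below.

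For each $n\ge 1$, the aim is to produce a simple closed curve $\gamma_n$ such that (i)~$[\gamma_n]=a_n-2a_{n+1}$ in $H_1(S,\Z)$; (ii)~$\gamma_n$ lies in a small neighborhood of $H_n\cup H_{n+1}$; and (iii)~the $\gamma_n$'s are pairwise disjoint. The class $a_n-2a_{n+1}$ has coefficient $1$ on $a_n$, hence is primitive in $H_1$ of any compact subsurface of $S$ containing $H_n\cup H_{n+1}$, so the Meeks--Patrusky theorem guarantees its realizability by a single simple closed curve in such a subsurface. Concretely, one places a single copy of $a_n$ in $H_n$ and two parallel copies of $-a_{n+1}$ in $H_{n+1}$, and then combines these three disjoint circles into a single simple closed curve by band sums performed in the neck between $H_n$ and $H_{n+1}$. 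Disjointness of $\gamma_{n-1}$ and $\gamma_n$, the only potentially problematic pair (curves $\gamma_m,\gamma_n$ with $|m-n|\ge 2$ are disjoint by~(ii)), is ensured by taking three disjoint parallel copies of $a_n$ inside $H_n$ and distributing two of them to $\gamma_{n-1}$ (for its $-2a_n$ contribution) and one to $\gamma_n$ (for its $+a_n$ contribution). Let $V_n$ be a sufficiently thin open annular neighborhood of $\gamma_n$; one can arrange that the $V_n$'s are pairwise disjoint, and set $U:=\bigsqcup_{n\ge 1}V_n$.

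With this construction, $H_1(U,\Z)=\bigoplus_{n\ge 1}\Z[\gamma_n]$, and the map to $H_1(S,\Z)$ sends $[\gamma_n]\mapsto a_n-2a_{n+1}$. A relation $\sum_n k_n(a_n-2a_{n+1})=0$ forces, by inspecting the coefficient of $a_m$, the identity $k_m=2k_{m-1}$ for every $m\ge 1$ (with the convention $k_0=0$), so all $k_m$ vanish; the map is therefore injective. Its cokernel splits as $\bigoplus_{n\ge 1}\Z b_n\oplus Q$, where $Q:=\bigoplus_{n\ge 1}\Z a_n/\langle a_n-2a_{n+1}\,:\,n\ge 1\rangle$. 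The assignment $a_n\mapsto 2^{1-n}$ defines an isomorphism $Q\simeq \Z[1/2]$, a nonzero $2$-divisible abelian group and hence not free; since any direct summand of a free abelian group is free, the cokernel is not free either. The main obstacle is the geometric step of producing the pairwise disjoint $\gamma_n$ with the prescribed primitive homology classes: once this cabling-and-band-sum construction is carried out (with Meeks--Patrusky providing realizability by a single simple closed curve), the algebraic verification of injectivity and non-freeness is routine.
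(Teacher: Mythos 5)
The algebraic half of your argument is correct and is essentially the paper's: if pairwise disjoint simple closed curves $\gamma_n$ with $[\gamma_n]=a_n-2a_{n+1}$ exist, then the cokernel contains $\Z[1/2]$ as a direct summand and is not free. The gap is in the geometric half, and it is not a repairable technicality: your conditions (i), (ii), (iii) cannot hold simultaneously. If each $\gamma_n$ together with its annulus $V_n$ were contained in a neighborhood of $H_n\cup H_{n+1}$, the family $\{\overline{V_n}\}$ would be locally finite, so $S':=S\setminus U$ would be a closed $\ci$ subsurface with boundary. Capping off the compact boundary circles of $S'$ by a union of discs $D$ and writing the homology long exact sequence of the resulting pair, one finds that $H_1(S,U,\Z)\cong H_1(S',\partial S',\Z)$ is an extension of a subgroup of the free group $H_0(D,\Z)$ by the free group $H_1$ of a surface, hence is itself free; its subgroup $H_1(S,\Z)/H_1(U,\Z)$ would then be free as well, contradicting your own computation. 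This is precisely the content of Remark \ref{rems1} (ii)--(iii): \emph{no} system of disjoint curves realizing these classes can be locally finite, so the curves cannot be localized as in your item (ii).

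One can also see concretely where the cabling-and-band-sum recipe breaks. A band sum adds the classes of the two circles it joins only when the band is attached to the same side (left or right, measured against the orientations) of both; otherwise it subtracts them. The region of $H_{n+1}$ containing the neck to $H_n$ abuts the two coherently oriented parallel copies of $-a_{n+1}$ from \emph{opposite} sides, so for $\gamma_n$ to represent $a_n-2a_{n+1}$ rather than $a_n$, its two bands are forced to leave the $a_n$-circle from opposite sides of that circle. For $\gamma_n$ alone this is harmless, since a single copy of $a_n$ does not separate $H_n$. But once the two parallel copies of $a_n$ used by $\gamma_{n-1}$ are present, the three disjoint parallel copies of $a_n$ cut $H_n$ into three annular regions, the neck leading to $H_{n+1}$ lies in only one of them, and only one side of $\gamma_n$'s copy of $a_n$ is accessible from that neck; one band is then forced to be incoherent and the curve you build represents $a_n$, not $a_n-2a_{n+1}$. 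The paper avoids this by constructing $\lambda_i$ \emph{inductively} inside $A_i=S_{\le i}\setminus\bigcup_{j<i}\mathring{T}_j$, the complement of all previously built tubes in the whole compact piece $S_{\le i}$, applying Meeks--Patrusky to the capped-off surface $B_i$ to realize a lift of $\alpha_{i-1}-2\alpha_i$; the price is that the curves spread back over the earlier handles and the resulting system is not locally finite.
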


\begin{proof}
Let $S_0$ be the torus $\R^2/\Z^2$ with a small open disc removed.  Define $\gamma_0:=\partial S_0$. For each $i\geq 1$, let $S_i$ be a copy of the torus $\R^2/\Z^2$ with two small open discs removed.  Let $\gamma_i$ and $\gamma'_i$ be the two connected components of $\partial S_i$.  Let $S$ be a connected orientable $\ci$ surface obtained from the disjoint union of the $(S_i)_{i\geq 0}$ by identifying $\gamma_i$ with $\gamma'_{i+1}$ for $i\geq 0$.
Let $S_{\leq j}$ be the subset $\cup_{i\leq j}S_i$ of $S$. It is a compact $\ci$ subsurface of $S$ with boundary $\partial S_{\leq j}=\gamma_j$.
Let $\alpha_i, \beta_i\in H_1(S_i,\Z)$ be the classes of a meridian and of an equator of the punctured torus $S_i$. 
The group~$H_1(S_{\leq i},\Z)$ is then freely generated by $\alpha_0,\beta_0,\dots,\alpha_i,\beta_i$.

We now construct by induction on $i\geq 1$ an oriented simple closed $\ci$ curve $\lambda_i$ in~$S_{\leq i}\setminus \partial S_{\leq i}$ with homology class $\alpha_{i-1}-2\alpha_i$ in $H_1(S_{\leq i},\Z)$, as well as a closed tubular neighborhood $T_i$ of $\lambda_i$ in $S_{\leq i}\setminus \partial S_{\leq i}$, with the property that $T_i\cap T_j=\varnothing$ if~$j<i$. Assume that $\lambda_1,\dots,\lambda_{i-1}$ (as well as $T_1,\dots, T_{i-1}$) have been constructed.

Consider the subgroup $\Lambda_i:=\langle \alpha_{j-1}-2\alpha_j\rangle_{1\leq j<i}$ of $H_1(S_{\leq i},\Z)$ and the compact~$\ci$ surface with boundary $A_i:=S_{\leq i}\setminus\cup_{j<i}\,\mathring{T}_j$. The Mayer--Vietoris long exact sequence associated with the covering of $S_{\leq i}$ by $A_i$ and~$\cup_{j<i}T_j$ shows that~$A_i$ is connected and yields an exact sequence
\begin{equation}
\label{MV}
0\to \langle\delta_j-\delta_j'\rangle_{1\leq j<i}\to H_1(A_i,\Z)\to H_1(S_{\leq i},\Z),
\end{equation}
where $\delta_j$ and $\delta_j'$ are the boundary components of $T_j$, and such that the image of the right arrow of (\ref{MV}) is the orthogonal $\Lambda_i^{\perp}$ of $\Lambda_i$ (with respect to the intersection product).
Let $\varepsilon_i\in H_1(A_i,\Z)$ be any lift of $\alpha_{i-1}-2\alpha_i\in \Lambda_i^{\perp}\subset H_1(S_{\leq i},\Z)$ in (\ref{MV}).

Let $B_i$ be the compact $\ci$ surface obtained by capping off~$A_i$ by gluing a disk to each of its boundary components.  We deduce from (\ref{MV}) that $H_1(B_i,\Z)$ identifies with the quotient 
$\Lambda_i^{\perp}/\Lambda_i$ of $H_1(A_i,\Z)$.
The class $\alpha_{i-1}-2\alpha_i$ is nonzero and primitive in $H_1(B_i,\Z)=\Lambda_i^{\perp}/\Lambda_i$. It therefore follows from \cite[Theorem 1]{MP} that there exists an oriented simple closed $\ci$ curve $\lambda_i$ on $A_i$ (which we may suppose disjoint from $\partial A_i$)  with homology class $\varepsilon_i$ in $H_1(A_i,\Z)$.  Its homology class in $H_1(S_{\leq i},\Z)$ is thus equal to $\alpha_{i-1}-2\alpha_i$. Letting $T_i$ be any small enough closed tubular neighborhood of $\lambda_i$ completes the induction.

To conclude the proof,  let $U$ be the disjoint union of the~$(T_i\setminus\partial T_i)_{i\geq 1}$.  It is an open subset of $S$.  As $H_1(U,\Z)$ is freely generated by the homology classes of the~$(\lambda_i)_{i\geq 1}$, the morphism~$H_1(U,\Z)\to H_1(S,\Z)$ is injective with image $\langle \alpha_{i-1}-2\alpha_i\rangle_{i\geq 1}$.  It follows that the image of $\alpha_0$ in $H_1(S,\Z)/H_1(U,\Z)$ is nonzero but divisible by $2^i$ for all $i\geq 1$. We deduce that the abelian group $H_1(S,\Z)/H_1(U,\Z)$ is not free.
\end{proof}

\begin{rems}
\label{rems1}
(i) A surface $S$ as in Theorem \ref{thdim1b} must have infinite genus (see Theorem \ref{thdim1a}). The surface $S$ considered in the proof of Theorem \ref{thdim1b} is therefore as simple as possible (it is the only connected orientable $\ci$ surface of infinite genus with only one end, see \cite[Theorem 1]{Richards}).

(ii) The open subset $U$ in Theorem \ref{thdim1b} must also necessarily be quite complicated.  For instance, it cannot be the complement of a closed $\ci$ subsurface with boundary~$S'$ of~$S$. 
To see it, let $T$ be the $\ci$ surface with boundary obtained from~$S'$ by capping off all the compact boundary components of $S'$ by gluing a union~$D$ of disks. Excision yields $H_1(S,U,\Z)=H_1(S',\partial S',\Z)=H_1(T,D\cup\partial T,\Z)$. The long exact sequence of homology of the pair $(T, D\cup\partial T)$ now realizes 
$H_1(S,U,\Z)$ as an extension of a subgroup of $H_0(D\cup \partial T,\Z)$ by $H_1(T,\Z)$, which are both free. It follows that $H_1(S,U,\Z)$, hence also its subgroup $H_1(S,\Z)/H_1(U,\Z)$, are free.

(iii) The main tool of the proof of Theorem \ref{thdim1b} is the construction of simple closed curves realizing appropriate homology classes on compact $\ci$ surfaces with boundary \cite[Theorem 1]{MP}.  The proof of this result in \cite{MP} is algorithmic.  The algorithm there is sufficiently complicated that the resulting curves may be quite intricate (see \eg the figures in \cite[pp.~263-264]{MP}).  A consequence is that the system of simple closed curves $(\lambda_i)_{i\geq 1}$ on $S$ that we construct in this way is not locally finite (and cannot be in view of (ii) above).
\end{rems}

\begin{cor}
\label{ex1}
There exist a Stein manifold $S$ of dimension $1$ and a Runge open subset $U\subset S$ such that $H^2(S,U,\Z)\neq 0$.
\end{cor}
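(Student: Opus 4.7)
The plan is to realize the smooth surface and the open subset of Theorem~\ref{thdim1b} as a Stein manifold of dimension~$1$ and a Runge open subset respectively, and then to deduce the non-vanishing of $H^2(S,U,\Z)$ from the non-freeness of $H_1(S,\Z)/H_1(U,\Z)$ via the universal coefficient theorem.

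First, I take $S$ and $U$ as in Theorem~\ref{thdim1b}. Every orientable $\ci$ surface admits a complex structure, and every noncompact Riemann surface is Stein by the theorem of Behnke--Stein, so $S$ may be endowed with the structure of a Stein manifold of dimension~$1$. To check that $U$ is Runge, I rely on the classical fact that an open subset of a noncompact Riemann surface is Runge if and only if no connected component of its complement is compact in~$S$. In the construction of Theorem~\ref{thdim1b} one may arrange $T_i\subset S_{i-1}\cup S_i$ (this is natural since $[\lambda_i]=\alpha_{i-1}-2\alpha_i$ only involves meridians of $S_{i-1}$ and $S_i$), and then $S\setminus U$ meets every $S_j$, so it is noncompact. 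Moreover, at each finite stage $j$, the mod~$2$ classes $[\lambda_i]\equiv\alpha_{i-1}\pmod 2$ for $1\leq i\leq j$ are linearly independent in $H_1(S_{\leq j},\Z/2)$, so $S_{\leq j}\setminus\bigcup_{i\leq j}\mathring{T}_i$ is connected; passing to the direct limit, $S\setminus U$ is connected.

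Once the Runge condition is established, the universal coefficient theorem yields
\[
0\to \Ext^1(H_1(S,U,\Z),\Z)\to H^2(S,U,\Z)\to\Hom(H_2(S,U,\Z),\Z)\to 0,
\]
and $H_2(S,U,\Z)=0$ by \cite[Theorem~1]{CM}, so $H^2(S,U,\Z)\simeq\Ext^1(H_1(S,U,\Z),\Z)$. The long exact sequence of the pair $(S,U)$, combined with the injectivity of $H_1(U,\Z)\to H_1(S,\Z)$ from Theorem~\ref{thdim1b}, produces a short exact sequence $0\to G\to H_1(S,U,\Z)\to K\to 0$ with $G:=H_1(S,\Z)/H_1(U,\Z)$ and $K$ a free abelian subgroup of $H_0(U,\Z)$. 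This sequence splits, so $\Ext^1(H_1(S,U,\Z),\Z)\simeq\Ext^1(G,\Z)$. The group $G$ is non-free by Theorem~\ref{thdim1b} and countable, so Stein's theorem (a countable abelian group $G$ is free if and only if $\Ext^1(G,\Z)=0$) gives $\Ext^1(G,\Z)\neq 0$, whence the desired non-vanishing.

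The main obstacle is the verification that $U$ is a Runge open subset of $S$: this requires committing to a specific topological placement of the tubular neighborhoods $T_i$ in the proof of Theorem~\ref{thdim1b}, and combining the Behnke--Stein theorem with the topological characterization of Runge open subsets of open Riemann surfaces.
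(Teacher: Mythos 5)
Your deduction of $H^2(S,U,\Z)\neq 0$ from the non-freeness of $G=H_1(S,\Z)/H_1(U,\Z)$ (universal coefficients, splitting off the free image in $H_0(U,\Z)$, Stein's theorem on $\Ext^1$ of countable groups) is correct and essentially the paper's argument. The gap is in your verification that $U$ is Runge. The arrangement $T_i\subset S_{i-1}\cup S_i$ is not merely unjustified: it is impossible. If it held, the family $(T_i)_{i\geq 1}$ would be locally finite (a compact subset of $S$ lies in some $S_{\leq j}$ and would then meet only $T_1,\dots,T_{j+1}$), so $S\setminus U$ would be a closed $\ci$ subsurface with boundary; by the computation in Remark \ref{rems1} (ii) this forces $H_1(S,U,\Z)$, hence its subgroup $H_1(S,\Z)/H_1(U,\Z)$, to be free. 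But that quotient is $\Z[1/2]\oplus\bigoplus_i\Z\beta_i$ no matter how the curves are embedded, since it only depends on the classes $[\lambda_i]=\alpha_{i-1}-2\alpha_i$; it is never free. This is exactly the content of Remark \ref{rems1} (iii): the system $(\lambda_i)$ cannot be locally finite. Consequently your limit argument also breaks down: $S\setminus U$ is \emph{not} the increasing union of the sets $S_{\leq j}\setminus\bigcup_{i\leq j}\mathring{T}_i$, because infinitely many later annuli $T_{i'}$ with $i'>j$ must re-enter $S_{\leq j}$, and removing an accumulating infinite family of open annuli from a compact piece can disconnect the complement and create compact components (think of annuli nested around a point). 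So neither the connectedness of $S\setminus U$ nor the absence of compact components is established, and establishing them directly from the construction would be genuinely delicate.

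The repair is immediate and is the route the paper takes: Theorem \ref{thdim1b} already hands you the injectivity of $H_1(U,\Z)\to H_1(S,\Z)$, and the Behnke--Stein approximation theorem (\cite[Satz 6]{BS}) characterizes Runge open subsets of an open Riemann surface precisely by this homological condition. Using that form of the criterion, rather than the complement-component form, removes the need to control the placement of the $T_i$ at all; with this substitution your proof is complete and coincides with the paper's.
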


\begin{proof}
Let $S$ and $U$ be as in Theorem \ref{thdim1b}. Fix a Riemannian metric on the~$\ci$ surface~$S$.  Since~$\SO(2)=\U(1)$, this determines an almost complex structure on~$S$. This almost complex structure is integrable (see \eg \cite[Theorem 4.2.6]{MDS}). Since~$S$ is noncompact, the resulting complex manifold is Stein by the Behnke--Stein theorem (\cite{BS}, see also \cite[Theorem 3.10.13]{Narasimhanbook}).  In addition, as the morphism~$H_1(U,\Z)\to H_1(S,\Z)$ is injective, 
the open subset $U$ of $S$ is Runge by the Behnke--Stein approximation theorem \cite[Satz 6]{BS}.
%Dans une surface de Riemann ouverte S, un ouvert U est Runge ssi H_1(U,Z)->H_1(S,Z) injective.  C'est exactement ce que montrent Behnke et Stein.
%Dans des références autres, e.g. les livres de Narasimhan ou de Forster, seulement une condition suffisante de Rungeness est donnée : S-U n'a pas de composantes connexe compacte.  Elle n'est pas nécessaire : cf S=C et U= complémentaire du topologist's sine curve.

Since the cokernel of the morphism $H_1(U,\Z)\to H_1(S,\Z)$ is not free, the abelian group $H_1(S,U,\Z)$ is not free either (by \cite[Theorem~14.5]{Fuchs1}). As $H_1(S,U,\Z)$ is countable, we deduce from  a theorem of Stein (\cite[\S 9, Hilfssatz 3]{Steinab},  see also \cite[Theorem~99.1]{Fuchs2}) that the group $\Ext^1(H_1(S,U,\Z),\Z)$ is nonzero. The universal coefficient theorem then implies that $H^2(S,U,\Z)$ is nonzero either.
\end{proof}

\section{Artin vanishing for \texorpdfstring{$G$}{G}-equivariant Stein spaces}

The goal of this section is the $G$-equivariant version of the Artin vanishing theorem for Stein spaces stated as Theorem \ref{th3} in the introduction.

\subsection{Artin vanishing relative to an  \texorpdfstring{$\cO(S)$}{O(S)}-convex compact subset}
\label{parconstructible}

We first prove a version of Artin vanishing relative to an arbitrary $\cO(S)$-convex compact subset (see Proposition \ref{propArtin}).
We refer to \cite[\S 2]{NarasimhanLeviI} for the definition of a $\ci$ strongly plurisubharmonic (psh) function on a possibly singular
%reduced
complex space.

\begin{lem}
\label{lempsh}
Let $S$ be a Stein space. Let $K\subset S$ be an $\cO(S)$-convex compact subset. Let $U\subset S$ be an open neighborhood of $K$. Then there exists a $\ci$ strongly psh exhaustion function ${\rho:S\to\R}$ such that $\rho<0$ on $K$ and $\rho>0$ on $S\setminus U$.
\end{lem}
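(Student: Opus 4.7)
My plan is to produce $\rho$ explicitly in the closed form $\rho = \epsilon e^{\varphi} + h - C$, where $\varphi$ is a $\ci$ strongly psh exhaustion of $S$, $h$ is a finite sum of squared moduli of well-chosen holomorphic functions, and $\epsilon, C > 0$ are constants. First I fix a $\ci$ strongly psh exhaustion $\varphi : S \to \R$, whose existence on any Stein space is standard (e.g., via Narasimhan's embedding into some $\C^N$ and restriction of $\|z\|^2$); after subtracting a constant I may assume $\sup_K \varphi < 0$. I then choose an open set $V$ with $K \subset V \Subset U$, and a number $r > 0$ with $\overline{V} \subset B_r := \{\varphi \leq r\}$, so that $B_r \setminus V$ is compact and disjoint from $K$.

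The core step is the construction of $h$ using the $\cO(S)$-convexity of $K$. For each $p \in B_r \setminus V$, there exists $g_p \in \cO(S)$ with $\sup_K |g_p| < 1 < |g_p(p)|$; by compactness finitely many of the open sets $\{|g_{p_j}| > 1\}$ cover $B_r \setminus V$. Raising each $g_{p_j}$ to a sufficiently high power before squaring makes its modulus arbitrarily small on $K$ and arbitrarily large on the corresponding open, so that $h := \sum_j |g_{p_j}|^{2 m_j}$ can be arranged to satisfy $\sup_K h$ as small and $\inf_{B_r \setminus V} h$ as large as desired. The function $h$ is $\ci$ and plurisubharmonic.

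The function $\rho = \epsilon e^{\varphi} + h - C$ is then $\ci$ strongly psh: the identity $\partial \bar{\partial} e^{\varphi} = e^{\varphi}\bigl(\partial \varphi \wedge \bar{\partial} \varphi + \partial \bar{\partial} \varphi\bigr)$ shows that $e^{\varphi}$ is strongly psh (interpreted locally after embedding in $\C^N$ on singular Stein spaces, following \cite{NarasimhanLeviI}), and a psh function plus a strongly psh function is strongly psh. It remains to fit the constants so that three conditions hold simultaneously: $\epsilon + \sup_K h < C$ (yielding $\rho < 0$ on $K$, since $e^{\varphi} < 1$ there), $\inf_{B_r \setminus V} h > C$ (yielding $\rho > 0$ on $B_r \setminus V$), and $\epsilon e^r > C$ (yielding $\rho > 0$ on $\{\varphi > r\}$). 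Since the separating ratio in $h$ is adjustable, this is possible. Together these give $\rho > 0$ on $S \setminus V \supseteq S \setminus U$, and $\rho \geq \epsilon e^{\varphi} - C \to +\infty$ forces every sublevel set $\{\rho \leq t\}$ into a compact sublevel set of $\varphi$, so $\rho$ is an exhaustion.

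The principal obstacle is the combinatorics of the constants: the three sign inequalities pull in opposite directions, and the use of $e^{\varphi}$ (rather than $\varphi$ itself) together with the large adjustable separating ratio in $h$ is precisely what reconciles them. A secondary technical issue, namely the meaning of ``$\ci$ strongly psh'' on a singular Stein space, is handled via Narasimhan's local-extension definition, under which the above expression for $\rho$ is manifestly of this type.
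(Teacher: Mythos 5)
Your construction is correct, and it is essentially the argument behind the reference the paper uses: the paper proves this lemma by citing H\"ormander's Theorem 5.1.6 (together with Forstneri\v{c}'s remark that the proof goes through on singular spaces), and H\"ormander's proof is precisely a sum $\epsilon\psi+\sum_j|g_j|^{2m_j}-C$ of a small strongly psh exhaustion and high powers of moduli of holomorphic functions separating $K$ from a compact piece of its complement; your version with the single finite sum $h$ handling $B_r\setminus V$ and the term $\epsilon e^{\varphi}$ handling $\{\varphi>r\}$ is a clean way to organize it. Two small points to tighten. First, the claim that $\inf_{B_r\setminus V}h$ can be made arbitrarily large by raising powers needs one more line: on $\{|g_{p_j}|>1\}$ the function $|g_{p_j}|^{2m_j}$ is only bounded below by $1$, no matter how large $m_j$ is. The fix is that $q\mapsto\max_j|g_{p_j}(q)|$ is continuous and $>1$ on the compact set $B_r\setminus V$, hence $\geq 1+\delta$ there for some $\delta>0$, giving $h\geq(1+\delta)^{2\min_j m_j}$; alternatively $\inf_{B_r\setminus V}h\geq 1$ already suffices if one takes $C<1$ and then $\epsilon$ strictly between $Ce^{-r}$ and $C-\sup_Kh$, which is possible once $\sup_Kh<C(1-e^{-r})$. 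Second, a general Stein space need not embed in any $\C^N$ (its local embedding dimension may be unbounded), so the existence of the strongly psh $\ci$ exhaustion $\varphi$ should be quoted as part of the Grauert--Narasimhan characterization of Stein spaces (or obtained as a convergent sum $\sum_j\epsilon_j|f_j|^2$ for a suitable sequence $f_j\in\cO(S)$) rather than by restricting $\|z\|^2$ along a global embedding. Neither point affects the validity of the argument.
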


\begin{proof}
When $S$ is nonsingular, this is exactly \cite[Theorem 5.1.6]{Hormander}. As noted in \cite[Proposition 2.5.1]{Forstneric}, the proof given in \loccit works in general (replacing \textit{forming a local system coordinates at $s$} by \textit{generating the maximal ideal of $\cO_{S,s}$}).
\end{proof}

\begin{prop}
\label{propArtin}
Let $S$ be a Stein space of dimension $n$. Let $K\subset S$ be an $\cO(S)$\nobreakdash-con\-vex compact subset. Let $\F$ be a weakly constructible sheaf on $S$. Then ${H^k(S, K,\F)=0}$ for $k>n$.
\end{prop}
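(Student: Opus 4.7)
The plan is to deduce the proposition from the absolute Artin vanishing theorem for Stein spaces with weakly constructible coefficients, recalled in \S\ref{Artin2}, by means of a cofinality argument based on Lemma \ref{lempsh}.

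First, apply Lemma \ref{lempsh} to obtain a $\ci$ strongly psh exhaustion function $\rho\colon S\to\R$ with $\rho<0$ on $K$. For $\epsilon$ slightly larger than $\sup_K\rho$, the sublevel sets $U_\epsilon:=\{\rho<\epsilon\}$ are then Stein of dimension $\leq n$ and form a cofinal decreasing system of open neighborhoods of $K$. The standard continuity of sheaf cohomology on paracompact spaces (\eg \cite[Chap.~II, \S 14]{Bredon}), together with the five lemma applied to the long exact sequences of the pairs $(S,K)$ and $(S,U_\epsilon)$, yields
$$H^k(S,K;\F)=\varinjlim_{\epsilon}H^k(S,U_\epsilon;\F).$$
It is therefore enough to show that $H^k(S,U_\epsilon;\F)=0$ for $k>n$ and every small $\epsilon$. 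The long exact sequence of the pair $(S,U_\epsilon)$ combined with the Kashiwara--Schapira and Sch\"urmann Artin vanishing theorem, applied to the Stein spaces $S$ and $U_\epsilon$ (both of dimension $\leq n$), immediately handles the degrees $k\geq n+2$.

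The main obstacle is the top degree $k=n+1$, where I still need $H^{n+1}(S,U_\epsilon;\F)=0$. I would handle this via Goresky--MacPherson's stratified Morse theory applied to $\rho$: by strong plurisubharmonicity, the normal Morse data at each critical point of the restriction of $\rho$ to any stratum of a complex analytic Whitney stratification of $S$ have Morse index $\leq n$. Consequently, as $\epsilon'$ grows from $\epsilon$ to $+\infty$, the sublevel sets $U_{\epsilon'}$ evolve by attaching stratified handles of index $\leq n$, whose contribution to $\RR\Gamma(S,U_\epsilon;\F)$ is concentrated in degrees $\leq n$ when $\F$ is weakly constructible. This is a variant of the argument behind \cite[Corollary~6.1.2]{Schurmann} (compare \cite[Theorem~10.3.8]{KS}), and combined with the reduction above it concludes the proof.
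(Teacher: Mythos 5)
Your overall strategy (reduce to sublevel sets of a strongly psh exhaustion function and run stratified Morse theory) is the same as the paper's, but as written there are two genuine gaps. First, the cofinality claim fails for a single function: if $\rho$ is the exhaustion function produced by Lemma \ref{lempsh} and $c=\sup_K\rho$, then $\bigcap_{\epsilon>c}U_\epsilon=\{\rho\leq c\}$, which is a compact set that in general strictly contains $K$ (the lemma only controls the sign of $\rho$ on $K$ and off one prescribed neighborhood, it does not force $\{\rho\leq c\}=K$). So the family $(U_\epsilon)_\epsilon$ is not cofinal among neighborhoods of $K$ and the identity $H^k(S,K,\F)=\varinjlim_\epsilon H^k(S,U_\epsilon,\F)$ is unjustified. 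The repair is to apply Lemma \ref{lempsh} to a shrinking sequence of neighborhoods $V_i$ with $\bigcap_i V_i=K$, obtaining a sequence of functions $\rho_i$ and compact sublevel sets $L_i=\{\rho_i\leq 0\}$ with $\bigcap_i L_i=K$; this is exactly what the paper does, and [Bredon, Lemma~14.2] then applies.

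The more serious issue is the degree $k=n+1$, which you rightly single out but then dispatch by asserting that the handles of index $\leq n$ attached as $\epsilon'$ grows contribute to $\RR\Gamma(S,U_\epsilon,\F)$ only in degrees $\leq n$. That assertion is the entire content of the statement and does not follow from the local index bound alone: infinitely many handles are attached, and passing from the compact stages to all of $S$ involves an inverse limit whose $\varprojlim^1$ term can produce classes in degree $n+1$. The paper's Theorem \ref{th1b} and Corollary \ref{higherdim} show this danger is real: there are Runge open subsets $U\subset S$ with $H_k(S,U,\Z)=0$ for all $k>n$ (so $S$ is built from $U$ by cells of dimension $\leq n$) and yet $H^{n+1}(S,U,\Z)\neq 0$. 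An argument phrased as yours is would "prove" the vanishing in that situation too, so something more is needed. The missing ingredient is the Mittag--Leffler condition: writing $S=\bigcup_i K_i$ with $K_i=\{\rho\leq\rho_i\}$ compact and $\rho_i$ regular values with respect to a Whitney stratification adapted to $\F$, stratified Morse theory in the form of \cite[Theorem~3.64~(1)]{Constructible} gives $H^k(K_i,K_{i-1},\F)=0$ and $H^k(K_i,K,\F)=0$ for $k>n$; the first vanishing makes the transition maps of the system $(H^{k-1}(K_i,K,\F))_i$ surjective, so $\varprojlim^1$ vanishes and $H^k(S,K,\F)=\varprojlim_i H^k(K_i,K,\F)=0$. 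Note that the paper works throughout with the compact sublevel sets $K_i\supseteq K$ and never introduces the open sets $U_\epsilon$, which also avoids having to compare open and closed sublevel sets.
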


\begin{proof}
Fix $k>n$.  Let $\cS$ be a Whitney stratification of~$S$ adapted to $\F$, \ie such that $\F$ is locally constant on the strata (see \eg \cite[Th\'eor\`eme 2.2]{Verdier}).
%see also [GMacPherson,\S 1.7, p. 43] or [KS, Proposition 8.5.4].

Assume first that ${K=\{s\in S\mid \rho(s)\leq \rho_0\}}$, where~${\rho:S\to\R}$ is a~$\ci$ strongly psh exhaustion function and~$\rho_0$ is a regular value of~$\rho$ with respect to~$\cS$.
Choose an increasing sequence
$(\rho_i)_{i\geq 0}$ of regular values of $\rho$ with respect to~$\cS$ 
tending to~$+\infty$.
Set~$K_i:=\{s\in S\mid \rho(s)\leq \rho_i\}$. 
 For~$i\geq 1$, stratified Morse theory shows that $H^k(K_{i},K_{i-1},\F)=H^k(K_{i},K,\F)=0$ (apply \cite[Theorem~3.64~(1)]{Constructible} 
%blanket Hausdorff and finite-dimensionality assumptions in the reference do not cause any problems.
with~$q=0$, noting that $\F\in\leftindex^p{D}_{\wc}^{\leq n}(S)$ in view of \cite[Definitions~2.16 and~2.20, Remark~2.17~(a)]{Constructible}, where the triangulated category~${D}^b_{\wc}(S)$ of weakly constructible bounded complexes of sheaves on~$S$ is endowed with its perverse $t$-structure for the middle perversity). The first vanishing implies that the projective system $(H^{k-1}(K_i,K,\F))_{i\geq 0}$ has surjective transition maps and hence satisfies the Mittag--Leffler condition.  It follows that~$H^k(S,K,\F)=\varprojlim_i H^k(K_i,K,\F)=0$.

Now let $K$ be arbitrary.  Using Lemma \ref{lempsh},  one can find a decreasing sequence~$(L_i)_{i\geq 1}$ of compact neighborhoods of $K$ with $\cap_{i\geq 1}L_i=K$, such that $L_i$ 
is a sublevel set of a $\ci$ strongly psh exhaustion function of $\rho_i:S\to\R$ (associated with a regular value with respect to $\cS$).  By the previous paragraph, one has $H^k(S,L_i,\F)=0$ for~$i\geq 1$.  After taking the direct limit of the exact sequences
$$H^{k-1}(L_i,K,\F)\to H^k(S,L_i,\F)\to H^k(S,K,\F)\to H^k(L_i,K,\F)$$
over all $i\geq 1$,
the two extreme terms vanish by \cite[Lemma 14.2]{Bredon}. 
%KS Remark 2.6.9 (ii)
We deduce an isomorphism
$H^k(S,K,\F)=\varinjlim_{i}H^k(S,L_i,\F)$. This group therefore vanishes.
\end{proof}

\subsection{Artin vanishing relative to the set of \texorpdfstring{$G$}{G}-fixed points}

In a second step,  in a $G$-equivariant setting, we prove an Artin vanishing theorem for cohomology relative to the set of $G$-fixed points (see Proposition \ref{propArtin2}).

\begin{lem}
\label{lemCarleman}
Let $S$ be a $G$-equivariant Stein space. Let $K\subset S$ be a $G$-invariant $\cO(S)$\nobreakdash-convex compact subset.  Let $L\subset S^G$ be a compact subset.  Then the compact subset $L\cup K$ of $S$ is $\cO(S)$-convex.
\end{lem}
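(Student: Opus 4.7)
The overall plan is to construct, for every open neighborhood $U$ of $L\cup K$ in $S$, a $G$-invariant $\ci$ strongly plurisubharmonic exhaustion function $\rho\colon S\to\R$ satisfying $\rho<0$ on $L\cup K$ and $\rho>0$ on $S\setminus U$. Granted such a $\rho$, a regular value $c\in(\sup_{L\cup K}\rho,0)$ gives a compact sublevel set $\{\rho\leq c\}$ which is $\cO(S)$-convex (by the classical fact that sublevel sets of $\ci$ strongly psh exhaustions on a Stein space are $\cO(S)$-convex), which contains $L\cup K$, and which lies in $U$. Intersecting over all $U$ would then yield $\widehat{L\cup K}_{\cO(S)}\subset L\cup K$, which is the conclusion of the lemma.

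The construction of $\rho$ would combine two strongly psh exhaustions, one adapted to~$K$ and one to~$L$. For~$K$, Lemma~\ref{lempsh} directly supplies a $\ci$ strongly psh exhaustion $\rho_K$ with $\rho_K<0$ on $K$ and $\rho_K>0$ on $S\setminus U$; replacing $\rho_K$ by $(\rho_K+\sigma^*\rho_K)/2$ (which remains strongly psh because $\sigma$ is antiholomorphic) renders it $G$-invariant, the sign inequalities being preserved as $K$ is $G$-invariant and $U$ may be shrunk to a $G$-invariant neighborhood. For~$L$ one needs the auxiliary statement that \emph{every compact subset of $S^G$ is $\cO(S)$-convex}. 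I would establish this by first fixing a $G$-equivariant closed Stein embedding $\iota\colon S\hookrightarrow\C^N$ on which $\sigma$ acts as complex conjugation (given any closed Stein embedding $S\hookrightarrow\C^M$ via $f_1,\dots,f_M$, the map $s\mapsto (f_i(s),\sigma^*f_i(s))$ into $\C^{2M}$ is $G$-equivariant for the swap-and-conjugate action, which becomes standard conjugation after a linear change of basis); then $\iota(L)\subset\R^N$, and compact subsets of $\R^N$ are polynomially convex in $\C^N$ by the classical argument using entire separators of the form $\exp(-\sum_k(z_k-\overline{(z_0)_k})^2)$ approximated by polynomial truncations. Restricting polynomials to $S$ shows that $L$ is $\cO(S)$-convex. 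A second invocation of Lemma~\ref{lempsh}, now applied to~$L$, then yields a $G$-invariant $\ci$ strongly psh exhaustion $\rho_L$ with $\rho_L<0$ on~$L$ and $\rho_L>0$ on $S\setminus U$.

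To combine $\rho_K$ and $\rho_L$, fix a $\ci$ convex increasing function $\chi\colon\R\to\R$ with $\chi(t)=t$ for $t\geq 0$ and with $\chi(t)\to-\infty$ sufficiently rapidly as $t\to-\infty$. Composition with such a $\chi$ preserves both strong plurisubharmonicity (convex increasing functions of psh functions are psh, and strong positivity of the Levi form is preserved) and the exhaustion property. Set $\rho:=\chi(\rho_K)+\chi(\rho_L)$. On $S\setminus U$ both $\rho_K,\rho_L>0$, so $\rho=\rho_K+\rho_L>0$. On $K$ there are constants $c_K>0$ and $M_K$ with $\rho_K\leq-c_K$ and $\rho_L\leq M_K$; by choosing $\chi$ so that $\chi(-c_K)+M_K<0$, one forces $\rho<0$ on $K$, and the symmetric requirement forces $\rho<0$ on $L$. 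This yields the desired~$\rho$ and completes the plan.

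The main obstacle I anticipate is the auxiliary $\cO(S)$-convexity of compact subsets of $S^G$: while intuitive, it rests on the $G$-equivariant Stein embedding and the totally real convexity fact in $\C^N$. The remaining ingredients, namely the application of Lemma~\ref{lempsh} and the combination of $\rho_K$ and $\rho_L$ via a convex composition and a sum, are essentially formal manipulations.
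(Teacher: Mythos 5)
There is a genuine gap at the step where you combine $\rho_K$ and $\rho_L$. You require $\chi$ to be convex, increasing, equal to the identity for $t\geq 0$, and to tend to $-\infty$ ``sufficiently rapidly'' as $t\to-\infty$; these requirements are incompatible. Since $\chi$ is convex and agrees with the identity on $[0,\infty)$, the supporting-line inequality at $0$ gives $\chi(t)\geq t$ for all $t$, so $\chi$ decreases at most linearly and $\chi(-c_K)\geq -c_K$. Your key inequality $\chi(-c_K)+M_K<0$ therefore fails whenever $M_K=\sup_K\chi(\rho_L)\geq c_K$, which is the typical situation (nothing forces $\rho_L$ to be small on $K$, only on $L$). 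Making $\chi$ plunge faster for $t<0$ destroys convexity and hence the plurisubharmonicity of $\chi\circ\rho_K$. This is not a fixable technicality: a purely formal gluing of the two convexity statements would prove that the union of \emph{any} two $\cO(S)$-convex compact subsets is $\cO(S)$-convex, which is false already for polynomial convexity in $\C^2$. What you really want, $\rho<0$ on $\{\rho_K<0\}\cup\{\rho_L<0\}$, is a $\min$-type condition, and $\min$ of psh functions is not psh.

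The missing ingredient is a genuine polynomial-convexity theorem about the \emph{union}: after the $G$-equivariant embedding $S\hookrightarrow\C^N$ (which you set up correctly, and which the paper also uses), one needs the fact that the union of a conjugation-invariant polynomially convex compact subset of $\C^N$ with a compact subset of $\R^N$ is again polynomially convex; the paper invokes exactly such a result and then pulls it back along the surjection $\cO(\C^N)\to\cO(S)$. You only use the embedding to show that $L$ alone is $\cO(S)$-convex, which is correct but insufficient. A secondary issue: you cannot fix a closed embedding $S\hookrightarrow\C^M$ for an arbitrary Stein space, since the local embedding dimension may be unbounded; the paper first replaces $S$ by a relatively compact Runge neighborhood of $L\cup K$ (using that the relevant hulls are unchanged) to reduce to finite embedding dimension. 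Your outer strategy of exhibiting $L\cup K$ as an intersection of $\cO(S)$-convex sublevel sets is fine in principle, but it presupposes the very union statement the lemma is about.
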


\begin{proof}
Let $U$ be a relatively compact Runge neighborhood of $L\cup K$ in~$S$ (see \mbox{\cite[(1.1)]{NarasimhanLeviII}}). As the $\cO(S)$-convex hull and the $\cO(U)$-convex hull of $L\cup K$ coincide (see \cite[p.~919]{Narasimhan}), we may replace $S$ with $U$. Then
$S$ has finite embedding dimension.
Let $i:S\to\C^N$ be a holomorphic embedding (see \cite[Theorem~6]{Narasimhan}). Replacing $N$ with $2N$ and  $i$ with $s\mapsto(i(s)+\overline{i\circ\sigma(s)},\sqrt{-1}\cdot(i(s)-\overline{i\circ\sigma(s)}))$, we may assume that $i$ is $G$-equivariant.  Since $K$ is a $G$-invariant $\cO(\C^{N})$-convex compact subset of $\C^N$, we deduce from \cite[Theorem 2]{SC} that $L\cup K$ is $\cO(\C^{N})$-convex. As~$i^*:\cO(\C^{N})\to\cO(S)$ is onto,  the compact subset $L\cup K$ is also $\cO(S)$-convex.
\end{proof}

\begin{prop}
\label{propArtin2}
Let $S$ be a $G$-equivariant Stein space of dimension $n$. Let $K\subset S$ be a $G$-invariant~$\cO(S)$\nobreakdash-convex compact subset. Let $\F$ be a $G$\nobreakdash-equivariant weakly constructible sheaf on $S$. Then $H^k(S,S^G\cup K,\F)=0$ for $k>n$.
\end{prop}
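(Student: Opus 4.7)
The plan is to apply Proposition~\ref{propArtin} to an exhaustion of $K \cup S^G$ by $\cO(S)$-convex compact subsets and pass to the limit. First, combining Lemma~\ref{lempsh} with an averaging of $\rho$ and $\rho \circ \sigma$, I would choose a $G$-invariant $\ci$ strongly plurisubharmonic exhaustion $\rho : S \to \R$ such that $\rho < 0$ on $K$. Choose also a $G$-invariant Whitney stratification $\cS$ of $S$ adapted to $\F$ in which $S^G$ is a union of strata, and an increasing sequence of regular values $0 < c_1 < c_2 < \cdots$ of $\rho$ with respect to $\cS$ tending to $+\infty$. Setting $L_i := S^G \cap \{\rho \leq c_i\}$ and $K_i := K \cup L_i$, Lemma~\ref{lemCarleman} ensures that each $K_i$ is $\cO(S)$-convex, so Proposition~\ref{propArtin} gives $H^k(S, K_i, \F) = 0$ for all $k > n$ and all $i$.

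Now fix $k > n$. From the long exact sequence of the triple $(S, S^G \cup K, K_i)$, the group $H^k(S, S^G \cup K, \F)$ is a quotient of $H^{k-1}(S^G \cup K, K_i, \F)$. Since $\rho < 0 < c_i$ on $K$, the subset $K$ lies in the interior of $K_i$ inside $S^G \cup K$; excising the open subset $K \setminus S^G$ therefore identifies $H^{k-1}(S^G \cup K, K_i, \F)$ with $H^{k-1}(S^G, L_i, \F|_{S^G})$. As $\F|_{S^G}$ is weakly constructible on the real-analytic space $S^G$ of real dimension at most $n$, classical Artin vanishing on real-analytic spaces yields the vanishing of this last group whenever $k - 1 > n$. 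This establishes $H^k(S, S^G \cup K, \F) = 0$ for all $k \geq n+2$.

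The main obstacle is the remaining case $k = n+1$, where $H^n(S^G, L_i, \F|_{S^G})$ need not vanish. My plan is to treat this case by a Mittag-Leffler argument as $i \to \infty$. The triple long exact sequences yield compatible surjections $H^n(S^G, L_i, \F|_{S^G}) \twoheadrightarrow H^{n+1}(S, S^G \cup K, \F)$ onto a common target, while the Milnor exact sequence applied to the exhaustion $L_i \nearrow S^G$ forces $\varprojlim_i H^n(S^G, L_i, \F|_{S^G}) = H^n(S^G, S^G, \F|_{S^G}) = 0$, modulo verifying the relevant $\varprojlim^{\,1}$ vanishing, itself guaranteed by the Artin-type vanishings already established for weakly constructible sheaves on $S^G$ in degrees $> n$. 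Combined with Mittag-Leffler applied to the kernels of these surjections (controlled by the groups $H^n(S, K_i, \F)$, which form an inverse system because $K_i$ is $\cO(S)$-convex and the transition pieces $H^n(K_{i+1}, K_i, \F) \cong H^n(L_{i+1}, L_i, \F|_{S^G})$ are again subject to the dimensional bounds on $S^G$), this forces $H^{n+1}(S, S^G \cup K, \F) = 0$. Rigorously verifying the relevant Mittag-Leffler conditions, and the applicability of the Milnor sequence in this sheaf-theoretic setting, will be the most delicate part of the argument.
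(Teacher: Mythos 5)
Your reduction of the case $k \geq n+2$ is correct: absorbing a compact piece $L_i$ of $S^G$ into the $\cO(S)$-convex set via Lemma~\ref{lemCarleman}, applying Proposition~\ref{propArtin}, and excising $K \setminus S^G$ does identify the obstruction with $H^{k-1}(S^G, L_i, \F|_{S^G})$, which vanishes for $k-1 > n$ by the covering-dimension bound on the at most $n$-dimensional set $S^G$ (cf.\ Lemma~\ref{lemvandim}). But the case $k = n+1$ --- which is the entire difficulty of the proposition --- is not proved, and the sketch you give for it does not close up. You have compatible surjections $H^n(S^G, L_i, \F|_{S^G}) \twoheadrightarrow H^{n+1}(S, S^G \cup K, \F)$ from an inverse system onto a fixed group, and you want to conclude that the target vanishes because the inverse limit does. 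That implication requires the Mittag--Leffler condition: the system $\Z \xleftarrow{\times 2} \Z \xleftarrow{\times 2} \cdots$ has zero inverse limit yet surjects compatibly onto $\Z/3$. And Mittag--Leffler is precisely what your dimensional bounds cannot deliver: the obstruction to surjectivity of the transition map $H^n(S^G, L_{i+1}, \F|_{S^G}) \to H^n(S^G, L_i, \F|_{S^G})$ lies in $H^n(L_{i+1}, L_i, \F|_{S^G})$, a degree-$n$ group on the $n$-dimensional set $S^G$, so it is \emph{not} ``subject to the dimensional bounds'' as you assert --- those only kill degrees $>n$. The invoked Milnor sequence for $L_i \nearrow S^G$ is likewise not a standard statement (continuity of sheaf cohomology holds for decreasing compacts or for exhaustions of the total space by compacts, not for an increasing family of closed subsets inside a fixed pair). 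A minor further point: a complex-analytic Whitney stratification of $S$ cannot have the totally real set $S^G$ as a union of strata, though you never actually need that.

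The paper avoids the degree-$n$ problem by never separating off the cohomology of $S^G$. It exhausts $S$ by $G$-invariant compact sublevel sets $S_{\leq i}$ of a $G$-invariant strongly psh exhaustion and observes that $S_{\leq i}^G \cup S_{\leq j}$ is a compact $\cO(S_{<i+\frac{1}{m}})$-convex subset of the slightly larger Stein open $S_{<i+\frac{1}{m}}$ (this is where Lemma~\ref{lemCarleman} is applied --- to a compact piece of the fixed locus that is the \emph{full} fixed locus of the compact sublevel set). Proposition~\ref{propArtin}, applied in that ambient Stein space and passed to the limit over $m$, then gives $H^k(S_{\leq i}, S_{\leq i}^G \cup S_{\leq j}, \F) = 0$ for \emph{all} $k > n$, including $k = n+1$. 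These are exactly the transition-obstruction groups for the projective system $\bigl(H^n(S_{\leq i}, S_{\leq i}^G \cup K, \F)\bigr)_i$, so Mittag--Leffler holds there and the limit over $i$ goes through. To salvage your argument you would have to arrange, in the same way, for the entire fixed locus of each compact piece to sit inside an $\cO$-convex compact subset of an ambient Stein open, rather than truncating $S^G$ inside the fixed ambient space $S$.
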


\begin{proof}
Fix $k>n$.  Assume first that there is a $G$\nobreakdash-in\-vari\-ant~$\ci$ strongly psh exhaustion function $\rho:S\to\R$ with $K=\{s\in S\mid\rho(s)\leq 0\}$. For~$t\in\R$, set $S_{\leq t}:=\{s\in S\mid \rho(s)\leq t\}$ and $S_{< t}:=\{s\in S\mid \rho(s)<t\}$.
By \cite[Theorem 3]{NarasimhanLeviI},  for integers $0\leq j\leq i$ and $m\geq 1$, the open subset~$S_{<i+\frac{1}{m}}$ of~$S$ is Runge, and $S_{\leq j}$ is $\cO(S_{<i+\frac{1}{m}})$-convex (as it has a basis of Runge open neighborhoods).
Lemma \ref{lemCarleman} implies that $S_{\leq i}^G\cup S_{\leq j}$ is $\cO(S_{<i+\frac{1}{m}})$\nobreakdash-convex for~$m\geq 1$.  Proposition \ref{propArtin} then shows that $H^k(S_{<i+\frac{1}{m}},S_{\leq i}^G\cup S_{\leq j},\F)=0$ for $m\geq 1$.  Consequently, 
$$0=\varinjlim_{m\geq 1}H^k(S_{<i+\frac{1}{m}},S_{\leq i}^G\cup S_{\leq j},\F)=\varinjlim_{m\geq 1} H^k(S_{\leq i+\frac{1}{m}},S_{\leq i}^G\cup S_{\leq j},\F).$$
By \cite[Lemma 14.2]{Bredon}, this group coincides with $H^k(S_i,S_{\leq i}^G\cup S_{\leq j},\F)$, which therefore vanishes. The long exact sequences
$$\dots\to H^k(S_{\leq i},S_{\leq i}^G\cup S_{\leq j},\F)\to H^{k}(S_{\leq i},S_{\leq i}^G\cup K,\F)\to H^{k}(S_{\leq j},S_{\leq j}^G\cup K,\F)\to\dots$$
now show that the projective system $(H^{k-1}(S_{\leq i},S_{\leq i}^G\cup K,\F))_{i\geq 0}$ has surjective transition maps and hence satisfies the Mittag--Leffler condition.  
 It follows that 
$$H^k(S,S^G\cup K,\F)=\varprojlim_i H^k(S_{\leq i},S_{\leq i}^G\cup K,\F)= \varprojlim_i H^k(S_{\leq i},S_{\leq i}^G\cup S_{\leq 0},\F)=0.$$

Now let $K$ be arbitrary.  By Lemma \ref{lempsh},  there is a decreasing sequence~$(L_i)_{i\geq 1}$ of compact neighborhoods of $K$ with $\cap_{i\geq 1}L_i=K$, such that the~$L_i$ 
are sublevel sets of $\ci$ strongly psh exhaustion functions of $\rho_i: S\to\R$. After replacing~$\rho_i$ with~$\frac{\rho_i+\rho_i\circ\sigma}{2}$, we may assume that $L_i$ and $\rho_i$ are $G$-invariant.  
  By the previous paragraph, one has $H^k(S,S^G\cup L_i,\F)=0$ for~$i\geq 1$.  After taking the direct limit over all~$i\geq 1$ of the exact sequences
$$H^{k-1}(L_i,L_i^G\cup K,\F)\to H^k(S,S^G\cup L_i,\F)\to H^k(S,S^G\cup K,\F)\to H^k(L_i,L_i^G\cup K,\F),$$
the two extreme terms vanish by \cite[Lemma 14.2]{Bredon}. We deduce an isomorphism
$H^k(S,S^G\cup K,\F)=\varinjlim_{i}H^k(S,S^G\cup L_i,\F)$. This group therefore vanishes.
\end{proof}

\subsection{\texorpdfstring{$G$}{G}-equivariant Artin vanishing}

We finally deduce Theorem \ref{thArtin}, which concerns $G$-equivariant cohomology, from Proposition \ref{propArtin2} which deals with non-$G$\nobreakdash-equivariant cohomology. The principle of the argument is similar to the proof of \cite[Lemma 1.16]{BW1}.  We include the next lemma for lack of a convenient reference.

\begin{lem}
\label{lemvandim}
Let $X$ be a second-countable Hausdorff topological space. Assume that each point of $X$ admits a closed neighborhood homeomorphic to a closed semianalytic subset of dimension $\leq d$ of $\R^N$ for some $N\geq 0$.  Let $\Gamma$ be a finite group acting on~$X$.  Let $\G$ be a sheaf on~$X/\Gamma$. Then $H^k(X/\Gamma,\G)=0$ for $k>d$.
\end{lem}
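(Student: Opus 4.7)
The plan is to deduce the vanishing from the standard comparison between sheaf\nobreakdash-cohomological dimension and covering dimension on paracompact Hausdorff spaces, after checking that $X/\Gamma$ inherits these properties and that its covering dimension is at most $d$.

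First, I would verify that $X/\Gamma$ is paracompact Hausdorff. Under the hypothesis, $X$ is locally compact (closed semianalytic subsets of $\R^N$ are locally compact) and, being second-countable, Hausdorff and locally compact, is both paracompact and metrizable. Since $\Gamma$ is finite, its orbits are finite and hence closed, so $X/\Gamma$ is Hausdorff; moreover the quotient map $\pi\colon X \to X/\Gamma$ is open, closed, and proper, so $X/\Gamma$ is second-countable and locally compact, hence paracompact.

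Second, I would bound the covering dimension of $X/\Gamma$. A closed semianalytic subset of $\R^N$ of dimension $\leq d$ is triangulable as a locally finite simplicial complex of dimension $\leq d$ (by Lojasiewicz's triangulation theorem), so has covering dimension $\leq d$. Thus $X$ has local covering dimension $\leq d$, and by second-countability (and the sum theorem of dimension theory for paracompact spaces) the global covering dimension of $X$ is $\leq d$ as well. Since $\pi$ is a closed surjection, and closed surjective maps do not raise covering dimension, we obtain $\dim(X/\Gamma) \leq d$.

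The conclusion then follows from the classical theorem (see e.g.\ Bredon, \emph{Sheaf Theory}, Theorem II.16.8) that on a paracompact Hausdorff space $Y$ of covering dimension $\leq n$, any sheaf of abelian groups on $Y$ has trivial cohomology in degrees $> n$; applied to $Y = X/\Gamma$ and to $\G$ this yields $H^k(X/\Gamma,\G)=0$ for $k>d$. The subtle points are the descent of the dimension bound through $\pi$ and the globalization of the local dimension bound on $X$, but both rest on standard facts in topological dimension theory for second-countable locally compact Hausdorff spaces.
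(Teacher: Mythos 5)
Your overall strategy is the same as the paper's: cover $X$ by countably many closed pieces that are triangulable of dimension $\leq d$ by \L{}ojasiewicz, apply the countable closed sum theorem to bound the covering dimension of $X$, transfer the bound to $X/\Gamma$, and conclude with the comparison between covering dimension and sheaf-cohomological dimension on a paracompact space (Bredon). The paracompactness/metrizability discussion and the triangulation step are fine.

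There is, however, a genuine error in the transfer step: the assertion that ``closed surjective maps do not raise covering dimension'' is false. Any continuous surjection from a compact metric space is closed, so a Peano curve $[0,1]\twoheadrightarrow[0,1]^2$ is already a counterexample; and even restricting to finite fibers does not save the statement, since Hurewicz's theorem on dimension-raising mappings (a closed surjection with fibers of cardinality $\leq k+1$ satisfies only $\dim Y\leq\dim X+k$) is sharp --- there exist closed $2$-to-$1$ maps that raise dimension by $1$, which for $|\Gamma|=2$ would only give $\dim(X/\Gamma)\leq d+1$. What rescues the argument is that the orbit map $\pi\colon X\to X/\Gamma$ of a finite group action is not merely closed but also \emph{open} and finite-to-one, and open-and-closed finite-to-one surjections of separable metrizable spaces preserve covering dimension (this is the content of the result from Engelking's dimension theory that the paper invokes at this point). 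So the conclusion $\dim(X/\Gamma)\leq d$ is correct, but you must use the openness of $\pi$; as written, the principle you appeal to does not hold.
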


\begin{proof} 
As $X$ is second-countable,  there is a countable covering $(X_i)_{i\in I}$ of~$X$, where each $X_i$ is a
closed semianalytic subset of dimension $\leq d$ of $\R^N$ for some~$N\geq 0$.  Since~$X_i$ is homeomorphic to a countable simplicial complex of dimension $\leq d$ (see \cite[Theorem~2]{Lojasiewicz}), its covering dimension (in the sense of \cite[Definition~1.6.7]{Engelking}) is~$\leq d$ (combine 
\cite[Theorems~1.8.2, 3.1.4 and 3.1.8]{Engelking}).  We deduce that $X/\Gamma$ also has covering dimension $\leq d$ (use  \cite[Theorems 1.7.7 and~1.12.10]{Engelking}).
The lemma now follows from \cite[II, Corollary 16.34]{Bredon} (note that the notions of covering dimension based on finite or possibly infinite coverings coincide by \cite[Proposition 3.2.2]{Engelking}).
\end{proof}

\begin{thm}
\label{thArtin}
Let $S$ be a $G$-equivariant Stein space of dimension $n$. Let $K\subset S$ be a $G$-invariant~$\cO(S)$\nobreakdash-convex compact subset. Let $\F$ be a $G$\nobreakdash-equivariant weakly constructible sheaf on $S$. Then $H^k_G(S,S^G\cup K,\F)=0$ for $k>n$.
\end{thm}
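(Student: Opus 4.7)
The plan is to convert equivariant cohomology into ordinary sheaf cohomology on the quotient $S/G$ via the finite quotient map $\pi: S \to S/G$, then exploit a short exact sequence coming from the norm map to reduce to the non-equivariant vanishing of Proposition \ref{propArtin2} together with the dimensional bound of Lemma \ref{lemvandim}. Let $j: S \setminus (S^G \cup K) \hookrightarrow S$ denote the open inclusion, and note $H^k_G(S, S^G \cup K, \F) = H^k_G(S, j_!j^*\F)$ and $H^k(S, S^G \cup K, \F) = H^k(S, j_!j^*\F)$. Since $\pi$ is finite, $\pi_*$ is exact, so the Grothendieck spectral sequence for $R\Gamma_G(S,-) = R\Gamma(S/G,-) \circ R\pi^G_*$ reduces to computing $R^q\pi^G_*(j_!j^*\F) = \mathcal{H}^q(G, \pi_*(j_!j^*\F))$, whose stalk at $x = \pi(s)$ is $H^q(G_s, (j_!j^*\F)_s)$ by Shapiro's lemma. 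This vanishes for $q > 0$: either $(j_!j^*\F)_s = 0$ (when $s \in S^G \cup K$) or the stabilizer $G_s$ is trivial (when $s \notin S^G$). The spectral sequence therefore degenerates to an isomorphism
\[
H^k_G(S, S^G \cup K, \F) \simeq H^k(S/G, \pi^G_*(j_!j^*\F)).
\]

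The key tool is the norm map $N: \pi_*(j_!j^*\F) \to \pi^G_*(j_!j^*\F)$ given by $N(x) = x + \sigma x$. A stalk computation shows that $N$ is surjective---this uses crucially that $j_!j^*\F$ vanishes on $S^G$, where surjectivity of $N$ could otherwise fail---and canonically identifies its kernel with $\pi^G_*(j_!j^*\F \otimes \epsilon)$, where $\epsilon$ denotes the sign representation of $G$ (so $\F \otimes \epsilon$ is $\F$ with $\sigma$ acting as $-\sigma$). This yields a short exact sequence of sheaves on $S/G$:
\[
0 \to \pi^G_*(j_!j^*\F \otimes \epsilon) \to \pi_*(j_!j^*\F) \xrightarrow{N} \pi^G_*(j_!j^*\F) \to 0.
\]
Since $\pi$ is finite, $H^k(S/G, \pi_*(j_!j^*\F)) = H^k(S, j_!j^*\F) = H^k(S, S^G \cup K, \F)$, which vanishes for $k > n$ by Proposition \ref{propArtin2}. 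The associated long exact sequence then yields $H^k(S/G, \pi^G_*(j_!j^*\F)) \simeq H^{k+1}(S/G, \pi^G_*(j_!j^*\F \otimes \epsilon))$ for $k > n$. Applying the same reasoning with $\F$ replaced by $\F \otimes \epsilon$ (noting that $\F \otimes \epsilon$ is again a $G$-equivariant weakly constructible sheaf, and that $\epsilon \otimes \epsilon$ is trivial) one obtains the periodicity
\[
H^k(S/G, \pi^G_*(j_!j^*\F)) \simeq H^{k+2}(S/G, \pi^G_*(j_!j^*\F)) \quad \text{for } k > n.
\]

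Finally, Lemma \ref{lemvandim} applied to $X = S$, which is semianalytic of real dimension $\leq 2n$, and $\Gamma = G$, shows that $H^k(S/G, \mathcal{G}) = 0$ for any sheaf $\mathcal{G}$ on $S/G$ whenever $k > 2n$. Iterating the periodicity upward from any $k > n$ until $k + 2\ell > 2n$ forces $H^k(S/G, \pi^G_*(j_!j^*\F)) = 0$, completing the proof. The main technical obstacle is setting up the norm-map short exact sequence---verifying the surjectivity of $N$ (where the hypothesis that $S^G$ is part of the relative pair intervenes in a decisive way) and canonically identifying its kernel with the sign-twisted invariants; once this sheaf-theoretic input is in place, the remainder is a formal exploitation of long exact sequences, the periodicity, and the dimensional bound.
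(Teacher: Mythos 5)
Your proof is correct and follows essentially the same route as the paper's: both reduce to sheaf cohomology on $S/G$ (using that $j_!j^*\F$ has vanishing stalks along $S^G$, the only locus with nontrivial stabilizers), establish a degree-$2$ periodicity via the sign twist combined with the non-equivariant vanishing of Proposition \ref{propArtin2}, and conclude with the dimension bound of Lemma \ref{lemvandim}. Your hand-built norm short exact sequence on $S/G$ is precisely the sheaf-level incarnation of the exact sequences the paper cites from the literature (which arise from $0\to\Z(1)\to\Z[G]\to\Z\to 0$ and its twist), so the difference is only one of packaging.
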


\begin{proof}
Denote by $j:S\setminus (S^G\cup K)\to S$ the inclusion map. 
Let $j_!j^*\F$ denote the extension by zero of the sheaf $j^*\F$.
Then one has $H^k(S,S^G\cup K,\F)=H^k(S,j_!j^*\F)$ and $H^k_G(S,S^G\cup K,\F)=H^k_G(S,j_!j^*\F)$ for all $k\geq 0$.

Let $\Z(1)$ be the $G$-module isomorphic to $\Z$ as an abelian group, on which the complex conjugation $\sigma\in G$ acts by multiplication by $-1$.  Set $\F(1):=\F\otimes_{\Z}\Z(1)$.
For~$k>n$,  the group $H^k(S,S^G\cup K,\F)$ vanishes by Proposition \ref{propArtin2}.
It therefore follows from the exact sequences 
\begin{alignat}{4}
\label{rc1}&\dots\to 
H^k(S,j_!j^*\F)\to H^k_G(S,j_!j^*\F(1))\to H_G^{k+1}(S,j_!j^*\F)\to\dots&
\textrm{ \hspace{.3em}and }
\\
\label{rc2}&\dots\to 
 H^k(S,j_!j^*\F)\to H_G^k(S,j_!j^*\F)\to H_G^{k+1}(S,j_!j^*\F(1))\to\dots&
\end{alignat}
(for which see \cite[(1.6) and (1.7)]{BW1})
that the cohomology group~$H^k_G(S,S^G\cup K,\F)$ injects into~$H^{k+2}_G(S,S^G\cup K,\F)$, hence into~$H_G^{k+2i}(S,S^G\cup K,\F)$ for all $i\geq 0$. 

Let $\pi:S\to S/G$ be the quotient map.  Consider the sheaf $\G:=(\pi_*j_!j^*\F)^G$ on~$S/G$. 
Then $H_G^{k+2i}(S,S^G\cup K,\F)=H^{k+2i}_G(S,j_!j^*\F)=H^{k+2i}(S/G,\G)$
(the second equality follows from the first spectral sequence of \cite[Th\'eor\`eme 5.2.1]{Tohoku} since the stalks of $j_!j^*\F$ along $S^G$ vanish).
This group vanishes when $i\gg 0$ by Lemma \ref{lemvandim} applied with $X=S$ and $\Gamma=G$. We deduce that $H^k_G(S,S^G\cup K,\F)=0$, which is what we wanted to prove.
\end{proof}

\begin{rems}
\label{remArtinG}
(i) Proposition \ref{propArtin2} can be deduced from Theorem \ref{thArtin} by applying it to the weakly constructible $G$-equivariant sheaf $\F\otimes_{\Z}\Z[G]$.  Similarly,  one can recover Proposition \ref{propArtin} from Theorem \ref{thArtin} by applying it to the $G$-equivariant Stein space~$S\sqcup S^{\sigma}$ where $S^{\sigma}$ is the conjugate complex space of $S$ (see \eg \cite[\S A.2]{Tight}) and where the action of $G$ exchanges $S$ and $S^{\sigma}$.

(ii) Theorem \ref{thArtin} holds more generally for $\F\in\leftindex^p{D}_{G,\wc}^{\leq n}(S)$,
where the triangulated category~${D}^b_{G,\wc}(S)$ of weakly constructible bounded complexes of $G$-equivariant sheaves on~$S$ is endowed with its perverse $t$-structure for the middle perversity. To see it, note that \cite[Theorem~3.64~(1)]{Constructible} is stated in this generality and that all the other arguments used in the proof extend at once from ($G$-equivariant) sheaves to bounded complexes of ($G$-equivariant) sheaves. Analogous remarks apply to Propositions \ref{propArtin} and \ref{propArtin2}.
\end{rems}

\subsection{$G$-equivariant vanishing on alterations}

We conclude this section by presenting a consequence of Theorem \ref{thArtin} which we will use in \cite{Steinsurface}. 

\begin{prop}
\label{lemvanmodif}
Let $S$ be a $G$-equivariant Stein space of dimension $n$.  Let ${p:T\to S}$ be a proper $G$-equivariant holomorphic map. Assume that there exists a nowhere dense closed analytic subset $\Sigma\subset S$ such that $p^{-1}(\Sigma)$ is nowhere dense in $T$ and ${p|_{p^{-1}(S\setminus \Sigma)}:p^{-1}(S\setminus \Sigma)\to S\setminus \Sigma}$ is finite.  Let $\F$ be a $G$-equivariant weakly constructible sheaf on $T$. Then $H^k_G(T,T^G,\F)=0$ for $k>\max(n,2n-2)$.
\end{prop}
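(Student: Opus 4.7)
The plan is to follow the two-step pattern of the proof of Theorem \ref{thArtin}: first establish the non-equivariant vanishing $H^k(T, T^G, \F) = 0$ for $k > \max(n, 2n-2)$, and then upgrade it to the equivariant statement by the same formal reduction based on the sequences (\ref{rc1}), (\ref{rc2}) and Lemma \ref{lemvandim}.

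For the non-equivariant step, I would push everything down to $S$. Let $j\colon T\setminus T^G \hookrightarrow T$ and $\cG := j_!j^*\F$. Since $p$ is proper,
$$H^k(T, T^G, \F) = H^k(T,\cG) = H^k(S, Rp_*\cG),$$
and I would analyze the right-hand side through the Leray spectral sequence $E_2^{a,b} = H^a(S, R^bp_*\cG)$. Proper base change computes the stalk of $R^bp_*\cG$ at $s$ as $H^b(p^{-1}(s), \cG|_{p^{-1}(s)})$, which vanishes once $\dim_{\C} p^{-1}(s) < b/2$; hence for $b\geq 1$ the weakly constructible sheaf $R^bp_*\cG$ is supported on the closed analytic subspace
$$S_{\geq \lceil b/2\rceil} := \{s\in S : \dim_{\C} p^{-1}(s) \geq \lceil b/2\rceil\}$$
of $S$. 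The hypothesis $\dim p^{-1}(\Sigma)\leq n-1$, together with the inclusion $S_{\geq i}\subset \Sigma$ and the fiber-dimension inequality $\dim p^{-1}(S_{\geq i}) \geq \dim S_{\geq i}+i$, yields $\dim S_{\geq i}\leq n-1-i$ for $i\geq 1$. Since closed analytic subspaces of the Stein space $S$ are themselves Stein, Proposition \ref{propArtin} then kills $E_2^{a,b}$ for $a > n-1-\lceil b/2\rceil$ when $b\geq 1$, and for $a > n$ when $b = 0$. The main obstacle will be a careful bookkeeping on the $E_2$-page showing that these bounds force $E_2^{a,b} = 0$ whenever $a + b > \max(n, 2n-2)$, with the extremal contribution coming from $(a,b) = (0, 2(n-1))$, which produces the $2n-2$ term.

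The equivariant upgrade is then formal. Applying the analogues of (\ref{rc1}), (\ref{rc2}) on $T$ to the sheaves $j_!j^*\F$ and $j_!j^*\F(1)$ yields injections $H^k_G(T, T^G, \F)\hookrightarrow H^{k+2i}_G(T, T^G, \F)$ for every $i\geq 0$, as soon as the non-equivariant cohomology vanishes in all degrees $\geq k$. Since the stalks of $j_!j^*\F$ along $T^G$ vanish, the first spectral sequence of \cite[Th\'eor\`eme 5.2.1]{Tohoku} identifies $H^{k+2i}_G(T, j_!j^*\F)$ with the cohomology of a sheaf on $T/G$, which vanishes for $i \gg 0$ by Lemma \ref{lemvandim}, since $T/G$ has covering dimension $\leq 2n$.
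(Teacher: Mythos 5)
Your overall architecture (prove the non-equivariant vanishing $H^k(T,T^G,\F)=0$ first, then upgrade via the sequences (\ref{rc1}), (\ref{rc2}) and Lemma \ref{lemvandim}) is reasonable, and the equivariant upgrade step is correct. The gap is in the non-equivariant step, at the assertion that $R^bp_*\cG$ is weakly constructible for $\cG=j_!j^*\F$, $j\colon T\setminus T^G\hookrightarrow T$, so that Proposition \ref{propArtin} applies to it. The fixed locus $T^G$ is only a \emph{real}-analytic subset of $T$, so $j_!j^*\F$ is not weakly constructible in the sense used in this paper (locally constant on the strata of a complex analytic stratification), and neither are the sheaves $R^bp_*\cG$: their stalks at $s\in S^G$ are $H^b(p^{-1}(s),p^{-1}(s)\cap T^G,\F)$, which differ from the stalks at nearby points of $S\setminus S^G$, and $S^G$ is not a complex analytic subset of $S$. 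Proposition \ref{propArtin} genuinely fails for such real-analytically constructible sheaves: already for $S=T=\C$, $p=\mathrm{id}$ and the circle $Z=S^1$ in place of $T^G$, one has $H^2(\C,j_!j^*\Z)=H^2(\C,S^1,\Z)\cong\Z\neq 0$ although $\dim_{\C}\C=1$. So the point where the special nature of the fixed locus must enter --- the content of Lemma \ref{lemCarleman} and Proposition \ref{propArtin2} --- is absent from your argument: nothing in your non-equivariant step distinguishes $T^G$ from an arbitrary closed real-analytic subset, and for an arbitrary such subset the conclusion is false.

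To repair this you essentially have to do what the paper does: split off the part of $\cG$ living over $S^G$ by means of the triple $(T,p^{-1}(S^G),T^G)$. The piece $H^k_G(p^{-1}(S^G),T^G,\F)$ is killed for $k>\max(n,2n-2)$ by the crude dimension bound of Lemma \ref{lemvandim}, since $p^{-1}(S^G)$ has real dimension $\leq\max(n,2n-2)$. For the remaining piece one works with $u_!u^*\F$, where $u\colon T\setminus p^{-1}(S^G)\hookrightarrow T$; proper base change gives $\RR^sp_*(u_!u^*\F)=v_!v^*\RR^sp_*\F$ with $\RR^sp_*\F$ honestly weakly constructible, and the resulting $E_2$-term $H^r_G(S,S^G,\RR^sp_*\F)$ is controlled by Theorem \ref{thArtin} (not by Proposition \ref{propArtin}), using your bound $\dim\mathrm{supp}\,\RR^sp_*\F\leq n-1-\tfrac{s}{2}$ for $s>0$, which is correct. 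Your bookkeeping on the $E_2$-page and the final dimension counts are otherwise fine.
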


\begin{proof}
Let $j:p^{-1}(S^G)\setminus T^G\to p^{-1}(S^G)$ and $i:p^{-1}(S^G)\to T$ be the inclusion maps
and let $\pi:p^{-1}(S^G)\to p^{-1}(S^G)/G$ be the quotient map. Then
\begin{equation}
\label{relcohos}
H^k_G(p^{-1}(S^G),T^G,\F)=H^k_G(p^{-1}(S^G), j_!j^*i^*\F)=H^k(p^{-1}(S^G)/G,\G),
\end{equation}
where $\G:=(\pi_*j_!j^*i^*\F)^G$ (the second equality follows from the first spectral sequence of \cite[Th\'eor\`eme 5.2.1]{Tohoku} since the stalks of $j_!j^*i^*\F$ along $T^G$ vanish).
Our hypotheses on $p$ imply that the analytic subset of $S$ over which the fibers of $p$ have (complex) dimension $\geq d$ has dimension $\leq n-1-d$ if ${d>0}$. It follows that the real-analytic subset $p^{-1}(S^G)$ of $T$ has (real) dimension $\leq \max(n,2n-2)$.  
We deduce from Lemma \ref{lemvandim} that the group (\ref{relcohos}) vanishes if~$k>\max(n,2n-2)$.  

Let $u:T\setminus (p^{-1}(S^G))\to T$ and $v:S\setminus S^G\to S$ be the inclusion maps. It follows from proper base change \cite[III, Theorem 6.2]{Iversen} that $\RR^sp_*(u_!u^*\F)=v_!v^*\RR^sp_*\F$.
The Leray spectral sequence of $p$ for the $G$-equivariant sheaf $u_!u^*\F$ therefore reads
\begin{equation}
\label{HS}
E_2^{r,s}=H^r_G(S,S^G,\RR^sp_*\F)\implies H^{r+s}_G(T,p^{-1}(S^G), \F).
\end{equation}

The sheaves $\RR^sp_*\F$ are weakly constructible (see \cite[Theorem 2.5]{Constructible}).  The support of $\RR^sp_*\F$ is included in the locus where $p$ has fibers of dimension $\geq \frac{s}{2}$ (combine proper base change  \cite[III, Theorem 6.2]{Iversen} and Lemma \ref{lemvandim}), and hence has dimension $\leq n-1-\frac{s}{2}$ if~$s>0$.  It therefore follows from (\ref{HS}) and Theorem~\ref{thArtin}
%the Artin vanishing theorem of \cite[Theorem 0.5]{Artinvanishing} 
that $H^k_G(T,p^{-1}(S^G), \F)=0$ if $k>\max(n,2n-2)$.  As (\ref{relcohos}) vanishes in the same range, the long exact sequence of $G$-equivariant cohomology of the triple $(T,p^{-1}(S^G),T^G)$ concludes the proof.
\end{proof}

\bibliographystyle{myamsalpha}
\bibliography{Artinvanishing}

\end{document}